    \newtheorem{Lem}{Lemma}[section]
    \newtheorem{Lem-Def}[Lem]{Lemma-Definition}
    \newtheorem{Prop}[Lem]{Proposition}
    \newtheorem{Thm}[Lem]{Theorem}  
    \newtheorem{Cor}[Lem]{Corollary}
\theoremstyle{definition}
\font\smallsc=cmcsc10
\font\smallsl=cmsl10
    \newtheorem{Rem}[Lem]{Remark}
\newcommand{\col}{\colon}
\renewcommand{\l}{\ell}
\newcommand{\Aut}{\text{Aut}}
\newcommand{\val}{\text{val}}
\begin{document}

\title{The automorphism group of $M_{0,n}^\textnormal{trop}$ and $\overline{M}_{0,n}^{\textnormal{trop}}$}

\author{Alex Abreu and Marco Pacini}

\thanks{The second author was partially supported by CNPq, processo 304044/2013-0.}

\begin{abstract}
In this paper we show that the automorphism groups of $M_{0,n}^{\text{trop}}$ and $\overline{M}_{0,n}^{\text{trop}}$ are isomorphic to the permutation group $S_n$ for $n\geq5$, while the automorphism groups of $M_{0,4}^{\text{trop}}$ and  $\overline{M}_{0,4}^{\text{trop}}$ are isomorphic to the permutation group $S_3$.
\end{abstract}

 \maketitle
\noindent MSC (2010): 14T05.\\
 Keywords: Tropical curve, moduli, automorphism.


\section{Introduction}

   The study of the biregular and birational geometry of $\overline{M}_{g,n}$, the moduli space of Deligne-Mumford stable curves, has recently attracted a lot of interest. Some natural issues, such as, for example, the computation of the automorphims group of $\overline{M}_{g,n}$ have been answered only in the last few years. In a series of papers (\cite{BM} and \cite{Ma}), Bruno, Massarenti and Mella proved that the automorphism group of $\overline{M}_{g,n}$ is the permutation group $S_n$, except in a few cases. This paper is devoted to the computation of the automorphism group of other moduli spaces which have an interesting geometric connection with $\overline{M}_{g,n}$.\par
	In the last decade, many interesting parallels have been made between tropical and algebraic geometry. Some classical and new results in algebraic geometry have been proven by means of tropical geometry, see for example \cite{CDPR}, \cite{JP} and \cite{JP1}. The tropical counterparts of $\overline{M}_{g,n}$ are the moduli space $M_{g,n}^{\text{trop}}$ of pointed tropical curves constructed in \cite{M}, \cite{Caporaso1} and \cite{BMV}, and its compactification $\overline{M}_{g,n}^{\text{trop}}$ constructed in \cite{Caporaso} by means of extended tropical curves. In \cite{ACP} the authors exhibited a geometrically meaningful connection between $\overline{M}_{g,n}$ and $\overline{M}_{g,n}^{\text{trop}}$.\par
	This paper is motivated by the following questions:
	\begin{enumerate}
	\item what are the automorphism group of $M_{g,n}^{\text{trop}}$ and $\overline{M}_{g,n}^{\text{trop}}$?
	\item what is the interplay between the automorphism group of $\overline{M}_{g,n}$ and the ones of $M_{g,n}^{\text{trop}}$ and $\overline{M}_{g,n}^{\text{trop}}$?
	\end{enumerate}
	We prove that the two groups in Question (1) are equal for $g=0$. Moreover for $n\geq 5$ the automorphism group of $M_{0,n}^{\text{trop}}$ is the symmetric group $S_n$, and the automorphism group of $M_{0,4}^{\text{trop}}$ is $S_3$. Our arguments strongly use that the graph underlying the considered tropical curves is a tree. So we do not see any trivial way to extend our techniques for higher $g$. At the end of the paper we show that the automorphism group of $M_2^{\text{trop}}$ is trivial and we briefly discuss Question (2).

\section{Preliminaries}

   A \emph{tree} is a connected graph without cycles. For a tree $\Gamma$, we denote by $V(\Gamma)$ and $E(\Gamma)$ its sets of vertices and edges, respectively. For a vertex $v\in V(\Gamma)$ we denote by $E(v)$ the set of edges incident to $v$, and by $\val(v)$ the cardinality of $E(v)$.   An \emph{isomorphism} $g$ between trees $\Gamma$ and $\Gamma'$ is defined as the data of bijections $g_V\col V(\Gamma)\to V(\Gamma')$ and $g_E\col E(\Gamma)\to E(\Gamma')$ which are compatible with incidence.
\begin{Rem}
\label{rem:iso}
If $g_1$ and $g_2$ are isomorphisms between trees $\Gamma$ and $\Gamma'$ with at least $3$ vertices such that $g_{1,E}=g_{2,E}$, then $g_1=g_2$.
\end{Rem}

	A \emph{legged tree} with legs indexed by the finite set $L$ (the set of legs) is the data of a tree $\Gamma$ and a map ${leg}_\Gamma\col L\to V(\Gamma)$. Usually, we will still write $\Gamma$ for a legged tree and denote by $L(\Gamma)$ its set of legs. Moreover we denote by $L(v)$ the set of legs incident to $v$, i.e., $L(v):={leg}_\Gamma^{-1}(v)$ and by $\l(v)$ the cardinality of $L(v)$. A \emph{$n$-legged tree} is a legged tree $\Gamma$ such that $L(\Gamma)=I_n:=\{1,\ldots,n\}$. A $n$-legged tree $\Gamma$ is \emph{stable} if $\val(v)+\l(v)\geq3$ for every $v\in V(\Gamma)$. A \emph{leaf} of a tree $\Gamma$ is a vertex with $\val(v)=1$. A \emph{chain} is a tree with only $2$ leaves. A \emph{path} in a tree $\Gamma$ is a subtree of $\Gamma$ that is a chain.\par
	 Given a subset $S\subset E(\Gamma)$, we define the legged tree $\Gamma/S$ as the legged tree obtained by contracting all edges in $S$. We say that a legged tree $\Gamma$ specializes to a legged tree $\Gamma'$ if there exists $S\subset E(\Gamma)$ such that $\Gamma'=\Gamma/S$.\par
	 An \emph{isomorphism} between $n$-legged trees $\Gamma$ and $\Gamma'$ is an isomorphism $g$ between the underlying trees which is also compatible with incidence of legs, i.e., $L(g_V(v))=L(v)$ for every $v\in V(\Gamma)$. We usually write $\Gamma\simeq\Gamma'$ if there exists an isomorphism between them.	Given a $n$-legged tree $\Gamma$ and a permutation $\sigma$ of $I_n$, we define the $n$-legged tree $\sigma(\Gamma)$ as the $n$-legged tree $\Gamma'$ with same underlying tree, but with $leg_{\Gamma'}=leg_{\Gamma}\circ \sigma^{-1}$. Given two $n$-legged trees $\Gamma$ and $\Gamma'$, and a permutation $\sigma$ of $I_n$ such that $\sigma(\Gamma)\simeq\Gamma'$, i.e., such that there exists an isomorphism $g$ between $\sigma(\Gamma)$ and $\Gamma'$, we denote by $\sigma(v)=g_V(v)$ and $\sigma(e)=g_E(e)$ for every $v\in V(\Gamma)$ and $e\in E(\Gamma)$.\par
   
\begin{Prop}
\label{prop:aut}
 A stable $n$-legged tree does not have nontrivial automorphisms.
\end{Prop}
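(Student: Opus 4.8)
The plan is to show that any automorphism $g$ of a stable $n$-legged tree $\Gamma$ must be the identity, proceeding by induction on the number of vertices of $\Gamma$. The base case is when $\Gamma$ has one or two vertices, which is immediate: with one vertex there is nothing to prove, and with two vertices the two vertices carry disjoint nonempty sets of legs (stability forces each to have at least two legs when there is only one edge), so $g_V$ fixes both vertices and hence $g$ is the identity by Remark~\ref{rem:iso} (or directly, since there is a single edge).

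For the inductive step, the key observation is that an automorphism must permute the leaves of $\Gamma$ among themselves, since $\val$ is preserved. A leaf $v$ of a stable $n$-legged tree has $\val(v)=1$, so stability forces $\l(v)\geq 2$: each leaf carries at least two legs. Because the legs $I_n$ are labelled and $g$ satisfies $L(g_V(v))=L(v)$, the automorphism must fix each leaf of $\Gamma$ (a leaf is the unique vertex carrying its particular legs), and it must fix the unique edge incident to that leaf. Now I would prune: let $\Gamma'$ be obtained from $\Gamma$ by deleting all leaves and their incident edges, and then re-attaching, for each deleted leaf $v$, a new leg at the vertex $w$ that was adjacent to $v$ — more precisely, one records on each surviving vertex the data of which old leaves/legs hung off of it. The pruned tree $\Gamma'$ is again a stable legged tree (the vertices adjacent to leaves pick up extra legs, so stability is preserved, and one checks no vertex is left with valence plus legs below $3$), it has strictly fewer vertices, and $g$ descends to an automorphism $g'$ of $\Gamma'$ because $g$ fixed the leaves and their edges.

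By the inductive hypothesis $g'$ is the identity, so $g_V$ fixes every non-leaf vertex of $\Gamma$; combined with the fact that $g_V$ fixes every leaf, $g_V$ is the identity on $V(\Gamma)$. It remains to see $g_E$ is the identity: an edge $e$ of a tree is determined by its pair of endpoints (a tree has no multiple edges), so $g_V=\mathrm{id}$ forces $g_E=\mathrm{id}$, and $g$ is trivial. One should handle the small-tree degeneracies with a little care — e.g. the case of a ``star'' where every vertex except the center is a leaf collapses in one step of pruning to a single vertex — but these are exactly covered by the base case.

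The main obstacle I anticipate is making the pruning construction clean: one must verify that $\Gamma'$ is still stable and still has labelled legs in a way that lets $g$ descend, and one must make sure the bookkeeping of legs (old legs plus ``phantom'' legs coming from deleted leaves) does not create collisions or violate stability at the vertices adjacent to the pruned leaves. A slicker alternative that avoids induction entirely: show directly that $g$ fixes every leg, hence (since every vertex is determined by the set of legs reachable from it without crossing a fixed... ) fixes every vertex — but in a tree one can argue that the vertex set and edge set are both reconstructible from the combinatorics of how the labelled legs are distributed, because removing any edge disconnects the leg set $I_n$ into two nonempty halves (nonempty by stability, since each component contains a leaf and each leaf has legs), and this partition data pins down the edge. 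Either route works; I would present the induction as it is the most transparent, and flag the stability check under pruning as the one point needing genuine verification.
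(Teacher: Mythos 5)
Your proof is correct and follows essentially the same strategy as the paper's: induct on the number of vertices, observe that every leaf carries a leg (in fact at least two, by stability) and is therefore fixed by any automorphism, then prune leaves into legs at their neighbours and apply the inductive hypothesis. The only cosmetic difference is that you prune all leaves simultaneously while the paper removes one leaf at a time, and your explicit check that stability survives pruning (since $\val(w)+\l(w)$ is unchanged at each neighbour $w$) is exactly the verification the paper leaves implicit.
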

\begin{proof}
An automorphism of a stable $n$-legged tree fixes the legs. Therefore each leaf of the graph must be fixed by such an automorphism, because each leaf must have at least one leg attached to it. Consider the stable legged tree obtained by removing a leaf and making the only edge incident to that leaf a leg. Since by induction on the number of vertices this graph has no nontrivial automorphisms, the result follows.
\end{proof}

	 A \emph{$n$-pointed tropical curve of genus 0} is the data of a $n$-legged tree $\Gamma$ together with a length function $E(\Gamma)\to \mathbb{R}_{>0}$. For a $n$-legged tree $\Gamma$, define the rational open polyhedral cone $C(\Gamma):=\mathbb{R}_{> 0}^{|E(\Gamma)|}$, and let  $\overline{C(\Gamma)}:=\mathbb{R}_{\geq 0}^{|E(\Gamma)|}$ be its closure.	The moduli space $M_{0,n}^{\text{trop}}$ of stable $n$-pointed tropical curves of genus 0 is the cone complex with cells $C(\Gamma)$, where $\Gamma$ runs through all stable $n$-legged trees, with glueing conditions specified by specializations. More precisely if $\Gamma$ specialized to $\Gamma'$, then $\overline{C(\Gamma')}$ is a face of $\overline{C(\Gamma)}$. The moduli space $\overline{M}_{0,n}^{\textnormal{trop}}$ of stable extended $n$-pointed tropical curves of genus $0$ is an extended cone complex which compactifies $M_{0,n}^{\textnormal{trop}}$. For more details about the terminology and the constructions of $M_{0,n}^{\text{trop}}$ and $\overline{M}_{0,n}^{\textnormal{trop}}$, see \cite[Section 2]{M}, \cite[Section 3]{Caporaso1}, \cite[Sections 2.1 and 3.2]{BMV} and \cite[Sections 2 and 4]{ACP}.\par
	 
	  An automorphism $f$ of $M_{0,n}^{\text{trop}}$ is a map of cone complexes $M_{0,n}^{\text{trop}}\to M_{0,n}^{\text{trop}}$ admitting an inverse which is also a map of cone complexes (see \cite[Section 2]{ACP}). Clearly an automorphism $f$ induces a permutation of the set of cells of $M_{0,n}^{\text{trop}}$ that preserves the dimension of each cell. \par
	Fix an automorphism $f$ of $M_{0,n}^{\text{trop}}$. Assume that $f(C(\Gamma))=C(\Gamma')$. By definition, $f|_{C(\Gamma)}$ is induced by an integral linear isomorphism $T\col\mathbb{R}^{|E(\Gamma)|}\to \mathbb{R}^{|E(\Gamma')|}$. Since $T(C(\Gamma))=C(\Gamma')$, we must have that $T$ sends the extremal rays of $\overline{C(\Gamma)}$ into the extremal rays of $\overline{C(\Gamma')}$, and since $T$ is primitive (because the inverse of $f|_{C(\Gamma)}$ must be integral as well),  it follows that $T$ is a permutation matrix, i.e., it is induced by a bijection between the sets $E(\Gamma)$ and $E(\Gamma')$. Abusing notation, we denote by $f\col E(\Gamma)\to E(\Gamma')$ such a bijection.  Note that given a subset $S\subset E(\Gamma)$, we have $f(C(\Gamma/S))=C(\Gamma'/f(S))$, because $\overline{C(\Gamma/S)}$ is a face of $\overline{C(\Gamma)}$.
	  
		An automorphism $\overline{f}$ of $\overline{M}_{0,n}^{\textnormal{trop}}$ is a morphism of extended cone complexes $\overline{M}_{0,n}^{\textnormal{trop}}\to\overline{M}_{0,n}^{\textnormal{trop}}$ admitting an inverse which is also a morphism of extended cone complexes (see \cite[Section 2]{ACP}).
		
		\begin{Prop}
		\label{prop:barra}
		There is a canonical isomorphism between the automorphism groups of $\overline{M}_{0,n}^{\textnormal{trop}}$ and $M_{0,n}^{\textnormal{trop}}$.
		\end{Prop}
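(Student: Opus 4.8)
The plan is to show that the inclusion $M_{0,n}^{\textnormal{trop}}\hookrightarrow \overline{M}_{0,n}^{\textnormal{trop}}$ induces a bijection on automorphism groups by exhibiting mutually inverse restriction and extension maps. The key structural fact is that $\overline{M}_{0,n}^{\textnormal{trop}}$ is built from the \emph{same} combinatorial data --- stable $n$-legged trees and their specializations --- as $M_{0,n}^{\textnormal{trop}}$, the only difference being that each cone $C(\Gamma)=\mathbb{R}_{>0}^{|E(\Gamma)|}$ is replaced by the extended cone $\overline{\mathbb{R}}_{\geq 0}^{|E(\Gamma)|}$ (where edges are allowed infinite length), and that $M_{0,n}^{\textnormal{trop}}$ sits inside $\overline{M}_{0,n}^{\textnormal{trop}}$ as the locus of finite-length points, which is dense and open in a suitable sense. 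So I would first recall from \cite{ACP} that a morphism of extended cone complexes restricts to a morphism of the underlying (ordinary) cone complexes on the finite locus, giving a restriction homomorphism $\rho\colon \Aut(\overline{M}_{0,n}^{\textnormal{trop}})\to \Aut(M_{0,n}^{\textnormal{trop}})$.

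Next I would construct the inverse. Given $f\in\Aut(M_{0,n}^{\textnormal{trop}})$, the analysis in the paragraph preceding the statement shows that $f$ permutes the cones $C(\Gamma)$ preserving dimension, and on each cone $f$ is given by a bijection $E(\Gamma)\to E(\Gamma')$ compatible with the face relations coming from specializations; equivalently $f$ is encoded by an automorphism of the poset of stable $n$-legged trees together with, for each $\Gamma$, a bijection $E(\Gamma)\xrightarrow{\sim} E(f(\Gamma))$, all of this being compatible with specialization. But exactly this combinatorial datum also determines a self-map $\overline{f}$ of $\overline{M}_{0,n}^{\textnormal{trop}}$: on each extended cone $\overline{\mathbb{R}}_{\geq 0}^{|E(\Gamma)|}$ use the permutation of coordinates given by $E(\Gamma)\to E(f(\Gamma))$, which extends continuously to infinite values and is clearly a morphism of extended cones, and these glue along faces because $f$ respects specializations. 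The same construction applied to $f^{-1}$ produces a two-sided inverse, so $\overline{f}\in\Aut(\overline{M}_{0,n}^{\textnormal{trop}})$. One checks $\rho(\overline{f})=f$ (the extension restricts to the original map on finite points) and, conversely, that any $\overline{g}\in\Aut(\overline{M}_{0,n}^{\textnormal{trop}})$ equals the extension of $\rho(\overline{g})$: this holds because $\overline{g}$ is already a permutation-of-coordinates map on each extended cone (the extremal-ray argument of the preceding paragraph applies verbatim to extended cones, or alternatively a morphism of extended cone complexes is determined by its restriction to the dense finite locus), so it is determined by the same combinatorial datum as its restriction. Hence $\rho$ is a bijection, and since it is visibly a group homomorphism, it is an isomorphism; that the isomorphism is canonical is immediate from the construction.

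I expect the main obstacle to be purely expository rather than mathematical: pinning down, from the references, the precise definition of ``morphism of extended cone complexes'' and verifying the two facts I am leaning on --- that such a morphism is determined by its restriction to the finite locus $M_{0,n}^{\textnormal{trop}}$, and that a coordinate permutation on an ordinary cone extends uniquely to the extended cone and is compatible with gluing along faces indexed by specializations. Both are essentially formal once the definitions are unwound (an extended cone $\overline{\mathbb{R}}_{\geq 0}^{k}$ is the closure of $\mathbb{R}_{\geq 0}^{k}$ in $[0,\infty]^{k}$, and a morphism is locally integral-linear hence continuous, so it is forced by its values on the interior), but they are where the care is needed. Everything else --- that $f$ and hence $\overline{f}$ are compatible with faces, that $\overline{f^{-1}}=\overline{f}^{-1}$, and that $\rho$ is a homomorphism --- follows directly from the bookkeeping already set up in the excerpt, in particular the relation $f(C(\Gamma/S))=C(\Gamma'/f(S))$.
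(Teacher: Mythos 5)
Your overall architecture (mutually inverse restriction and extension maps) is the same as the paper's, and the extension direction is fine: an automorphism of $M_{0,n}^{\textnormal{trop}}$ is a coordinate permutation on each cone compatible with faces, so it extends by linearity/continuity to the extended cones. The gap is in the restriction direction, at the exact point you dispose of by ``recalling'' it: it is \emph{not} true that a morphism of extended cone complexes restricts to a morphism of ordinary cone complexes on the finite locus. In the formalism of Abramovich--Caporaso--Payne, a morphism of extended cones $\overline{\sigma}\to\overline{\sigma}'$ is allowed to carry the finite interior $\sigma$ onto a face \emph{at infinity} of $\overline{\sigma}'$ (for instance $x\mapsto(x,\infty)$ is a perfectly good morphism $\overline{\mathbb{R}}_{\geq 0}\to\overline{\mathbb{R}}_{\geq 0}^{\,2}$). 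So your map $\rho$ is not well defined until you prove that an automorphism $\overline{f}$ of $\overline{M}_{0,n}^{\textnormal{trop}}$ satisfies $\overline{f}(M_{0,n}^{\textnormal{trop}})=M_{0,n}^{\textnormal{trop}}$. This is precisely what the paper's proof is devoted to, and your alternative justifications do not substitute for it: ``determined by its restriction to the dense finite locus'' only gives uniqueness of the extension, not that the restriction lands in the finite locus; and ``the extremal-ray argument applies verbatim'' is circular, since that argument presupposes that the open cone $\sigma$ maps onto the open cone $\sigma'$ rather than onto a stratum at infinity.

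The missing argument is short and you should supply it: since $\overline{f}$ is an automorphism, it permutes the extended cones preserving dimension, so each maximal extended cone $\overline{\sigma}$ maps to a maximal extended cone $\overline{\sigma}'$; by the definition of morphism of extended cones, $\overline{f}|_{\sigma}$ is a morphism of cones $\sigma\to\tau$ with $\tau$ a face (possibly at infinity) of $\overline{\sigma}'$; injectivity of $\overline{f}|_{\sigma}$ together with $\dim\sigma=\dim\sigma'$ forces $\tau=\sigma'$. Since $M_{0,n}^{\textnormal{trop}}$ is the union of its maximal cones, this gives $\overline{f}(M_{0,n}^{\textnormal{trop}})=M_{0,n}^{\textnormal{trop}}$, after which your restriction/extension bookkeeping goes through as written.
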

		
		\begin{proof} We note that each automorphism of $M_{0,n}^{\textnormal{trop}}$ extends to an automorphism of $\overline{M}_{0,n}^{\textnormal{trop}}$ by linearity. Let $\overline{f}$ be an automorphism of $\overline{M}_{0,n}^{\textnormal{trop}}$. By the definition of morphism of extended cone complex (see \cite[Section 2]{ACP}), for each extended cone $\overline{\sigma}$ in $\overline{M}_{0,n}^{\textnormal{trop}}$, there exists an extended cone $\overline{\sigma}'$ in $\overline{M}_{0,n}^{\textnormal{trop}}$ such that $\overline{f}|_{\overline{\sigma}}$ factors through a morphims of extended cones $\overline{f}|_{\overline{\sigma}}\col\overline{\sigma}\to\overline{\sigma}'$. If $\overline{\sigma}$ is a maximal cone, then, since $\overline{f}$ is an automorphism, so must be $\overline{\sigma}'$. By definition of a morphism of extended cones (see \cite[Section 2]{ACP}), we have that $\overline{f}|_{\sigma}$ is a morphism of cones $\overline{f}|_{\sigma}\col\sigma\to \tau$ where $\tau$ is a face (possibly at infinity) of $\overline{\sigma}'$. Hence, since $\overline{f}|_\sigma$ is injective (because $\overline{f}$ is an automorphism) and $\dim(\sigma)=\dim(\sigma')$, we must have that $\tau=\sigma'$. The same holds true for every maximal extended cone of $\overline{M}_{0,n}^{\textnormal{trop}}$. Hence $\overline{f}(M_{0,n}^\text{trop})=M_{0,n}^{\textnormal{trop}}$ because $M_{0,n}^{\textnormal{trop}}$ is the union of its maximal cones. So the restriction of $\overline{f}$ to $M_{0,n}^{\textnormal{trop}}$ is an automorphism of $M_{0,n}^{\textnormal{trop}}$.\end{proof}

\section{The result}

Throughout the section $f$ will be a fixed automorphism of $M_{0,n}^{\text{trop}}$.

\begin{Prop}
\label{prop:count}
Let $\Gamma$ be a stable $n$-legged tree with $m$ edges. Then the number of $(m+1)$-dimensional cones in $M_{0,n}^{trop}$ whose closure contain $C(\Gamma)$ is 
\[
\sum_{v\in V(\Gamma)} \left(2^{\l(v)+\val(v)-1}-(\l(v)+\val(v)+1)\right).
\]
\end{Prop}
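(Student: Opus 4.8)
The plan is to understand which $(m+1)$-dimensional cones $C(\Gamma')$ have $\overline{C(\Gamma)}$ as a face. By the gluing conditions of $M_{0,n}^{\textnormal{trop}}$, this happens precisely when $\Gamma$ is a specialization of $\Gamma'$, i.e. $\Gamma = \Gamma'/S$ for some $S \subset E(\Gamma')$; since $\dim C(\Gamma') = |E(\Gamma')| = m+1$ and $\dim C(\Gamma) = m$, we need $|S| = 1$. So I must count, up to isomorphism of $n$-legged trees, the stable $n$-legged trees $\Gamma'$ admitting a single edge $e$ with $\Gamma'/e \simeq \Gamma$. Equivalently — and this is the key reformulation — such a $\Gamma'$ is obtained from $\Gamma$ by choosing a vertex $v \in V(\Gamma)$ and ``splitting'' it into two vertices joined by a new edge $e$, distributing the legs in $L(v)$ and the half-edges in $E(v)$ among the two new vertices, subject to stability.

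\textbf{Counting the splittings at a fixed vertex.} Fix $v \in V(\Gamma)$ and set $k := \l(v) + \val(v)$, the total number of ``flags'' (legs plus incident edge-ends) at $v$. Splitting $v$ into $v_1, v_2$ joined by a new edge $e$ amounts to partitioning these $k$ flags into two nonempty-in-the-right-sense groups: flags going to $v_1$ and flags going to $v_2$. A choice of subset $A \subseteq (\text{flags at } v)$ gives the flags at $v_1$, with the complement at $v_2$, and then $v_1$ also receives the flag $e$ while $v_2$ receives $e$; so $\val(v_1)+\l(v_1) = |A|+1$ and similarly $|A^c|+1$ on the other side. Stability requires $|A|+1 \geq 3$ and $|A^c|+1 \geq 3$, i.e. $2 \leq |A| \leq k-2$. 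The unordered pair $\{A, A^c\}$ is what determines $\Gamma'$ up to isomorphism (using Proposition~\ref{prop:aut}, stable legged trees have no automorphisms, so distinct unordered splittings give non-isomorphic $\Gamma'$, and splittings at distinct vertices are visibly non-isomorphic). The number of subsets $A$ with $2 \leq |A| \leq k-2$ is $2^k - \binom{k}{0} - \binom{k}{1} - \binom{k}{k-1} - \binom{k}{k} = 2^k - 2 - 2k = 2^k - (2k+2)$; dividing by $2$ to pass to unordered pairs gives $2^{k-1} - (k+1)$, which is exactly the summand $2^{\l(v)+\val(v)-1} - (\l(v)+\val(v)+1)$. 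Summing over $v \in V(\Gamma)$ yields the claimed formula.

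\textbf{The main obstacle.} The routine part is the binomial bookkeeping; the step requiring genuine care is the claim that the correspondence between $(m+1)$-cones containing $\overline{C(\Gamma)}$ and unordered stable splittings of vertices of $\Gamma$ is a \emph{bijection} — in particular that no two distinct splittings (at the same vertex or at different vertices) produce isomorphic $n$-legged trees, and conversely that every such $\Gamma'$ arises this way with $e$ uniquely determined. The forward direction (every $(m+1)$-cone containing $\overline{C(\Gamma)}$ comes from contracting a unique edge) is immediate from the face structure. For injectivity I would argue that the isomorphism type of $\Gamma'$ determines the contracted edge $e$ (it is the unique edge with $\Gamma'/e \simeq \Gamma$ having the right ``local'' profile) and hence recovers the pair $(v, \{A,A^c\})$; here Remark~\ref{rem:iso} and Proposition~\ref{prop:aut} do the work, since an isomorphism $\Gamma' \simeq \Gamma''$ of stable $n$-legged trees is unique and must carry the contracted edge to the contracted edge and match the leg-distributions. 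One should also handle the degenerate bookkeeping when $k$ is small (e.g. $k = 3$ or $k=4$), where the summand is $0$ or the inequality $2 \leq |A| \leq k-2$ is vacuous or forces $|A|=|A^c|$, but in all cases the formula $2^{k-1}-(k+1)$ correctly evaluates to a nonnegative integer, so no separate argument is needed.
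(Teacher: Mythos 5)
Your proof is correct and follows essentially the same route as the paper: identify the $(m+1)$-dimensional cones containing $\overline{C(\Gamma)}$ with one-edge stable splittings of the vertices of $\Gamma$, count the admissible ordered flag-partitions at a vertex as $2^{\l(v)+\val(v)}-2(\l(v)+\val(v)+1)$, and divide by $2$. The extra care you take with injectivity of the splitting-to-cone correspondence is a point the paper passes over silently, but it does not change the argument.
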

\begin{proof}
The number of $(m+1)$-dimensional cones whose closure contain $C(\Gamma)$ is precisely the number of stable $n$-legged trees with $m+1$ edges that specialize to $\Gamma$. To construct a stable $n$-legged tree $\Gamma'$ with $m+1$ edges that specializes to $\Gamma$, it is equivalent to replace a vertex $v$ of $\Gamma$ by two vertices $v_1$ and $v_2$, connected by an edge $e$, with $L(v)=L(v_1)\coprod L(v_2)$ and $E(v)=(E(v_1)\setminus\{e\})\coprod(E(v_2)\setminus\{e\})$.\par
   Since $\l(v_i)+\val(v_i)\geq 3$ for $i=1,2$, we have to make a partition of $L(v)\cup E(v)$ into two subsets such that each one has at least $2$ elements. Clearly, the number of ways to do this is $2^{\l(v)+\val(v)}-2(\l(v)+\val(v)+1)$. By symmetry, we have to divide by 2, and we obtain the result.
\end{proof}
For the next several results we note that there exists a permutation $\sigma$ of $I_n$ such that $\Gamma\simeq\sigma(\Gamma')$, if and only if $\Gamma$ and $\Gamma'$ are isomorphic as unlegged trees and the corresponding vertices have the same number of legs.\par

\begin{Cor}
\label{cor:2f}
Let $\Gamma$ and $\Gamma'$ be two stable $n$-legged trees with 2 vertices such that $f(C(\Gamma))=C(\Gamma')$. Then there exists a permutation $\sigma$ of $I_n$ such that $\sigma(\Gamma)\simeq\Gamma'$.
\end{Cor}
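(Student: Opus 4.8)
The plan is to reduce the statement to a single numerical invariant extracted from Proposition~\ref{prop:count}. A stable $n$-legged tree with $2$ vertices has exactly one edge, and since each vertex $v$ satisfies $\val(v)=1$, stability forces $\l(v)\ge 2$; hence such a tree is determined up to isomorphism by an unordered partition $\{A,I_n\setminus A\}$ of $I_n$ with $|A|,|I_n\setminus A|\ge 2$. Write $a$ for the smaller of $|A|$ and $|I_n\setminus A|$, so $2\le a\le n/2$, and let $b$ be the corresponding integer attached to $\Gamma'$. By the observation stated just before this corollary, it is enough to prove $a=b$: then $\Gamma$ and $\Gamma'$ have the same (unordered) leg distribution, and they are trivially isomorphic as unlegged trees, so some permutation $\sigma$ of $I_n$ realizes $\sigma(\Gamma)\simeq\Gamma'$.

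Next I would observe that the quantity computed in Proposition~\ref{prop:count} is invariant under $f$. Indeed, $f$ is an automorphism of cone complexes, hence it preserves dimensions and the face (i.e.\ specialization) relation on cells; therefore the number of $2$-dimensional cones whose closure contains $C(\Gamma)$ equals the number of $2$-dimensional cones whose closure contains $C(\Gamma')=f(C(\Gamma))$. Feeding a $2$-vertex tree with leg distribution $\{a,n-a\}$ into Proposition~\ref{prop:count}, and using $\val(v_i)=1$, this number equals
\[
\bigl(2^{a}-(a+2)\bigr)+\bigl(2^{n-a}-(n-a+2)\bigr)=2^{a}+2^{n-a}-n-4,
\]
and likewise it equals $2^{b}+2^{n-b}-n-4$ for $\Gamma'$. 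Hence $2^{a}+2^{n-a}=2^{b}+2^{n-b}$.

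Finally I would conclude by monotonicity. The real function $x\mapsto 2^{x}+2^{n-x}$ has derivative $(\ln 2)\bigl(2^{x}-2^{n-x}\bigr)$, which is negative for $x<n/2$, so the function is strictly decreasing on $[2,n/2]$ and in particular injective on the integers in that range. Since $2\le a\le n/2$ and $2\le b\le n/2$, the equality $2^{a}+2^{n-a}=2^{b}+2^{n-b}$ forces $a=b$, completing the argument.

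I do not expect a genuine obstacle here; the only points requiring care are the identification of a $2$-vertex stable $n$-legged tree with such a partition (so that ``equal leg distributions'' is precisely the hypothesis of the cited observation) and the remark that the count of Proposition~\ref{prop:count} is preserved by $f$ — both are immediate once spelled out. The case $n=4$ is vacuous, as then $a=b=2$ is forced, and for $n\le 3$ there are no $2$-vertex stable $n$-legged trees at all.
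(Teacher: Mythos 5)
Your proposal is correct and follows the same route as the paper: both apply Proposition \ref{prop:count} to the two-vertex trees, use that $f$ preserves the count of $2$-dimensional cones containing the given cell in their closure, and deduce $2^{\l(v_1)}+2^{\l(v_2)}=2^{\l(v_1')}+2^{\l(v_2')}$, which pins down the leg distribution. Your explicit monotonicity argument for $x\mapsto 2^{x}+2^{n-x}$ just spells out the step the paper leaves implicit in ``from which we get $\l(v_1)=\l(v_1')$ or $\l(v_1)=\l(v_2')$.''
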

\begin{proof}
Let $v_1$, $v_2$ be the vertices of $\Gamma$ and $v_1'$, $v_2'$ the ones of $\Gamma'$. Since $f$ is an automorphism, the numbers of $2$-dimensional cones whose closure contain $C(\Gamma)$ and $C(\Gamma')$ are equal. Then we have 

\[
2^{\l(v_1)}+2^{\l(v_2)}-(\l(v_1)+\l(v_2)+4)=2^{\l(v_1')}+2^{\l(v_2')}-(\l(v_1')+\l(v_2')+4),
\]
and hence
\[
2^{\l(v_1)}+2^{\l(v_2)}=2^{\l(v_1')}+2^{\l(v_2')}
\]
from which we get $\l(v_1)=\l(v_1')$ or $\l(v_1)=\l(v_2')$ and the result follows.
\end{proof}
\begin{Lem}
\label{lem:count}
Let $a_i, b_i$, for $i=1,2,3$ be natural numbers such that $a_1+a_2+a_3=b_1+b_2+b_3$. If 
\[
2^{a_1}+2^{a_2}+2^{a_3}=2^{b_1}+2^{b_2}+2^{b_3}
\]
then $a_i=b_{\tau(i)}$ for some permutation $\tau$ of $\{1,2,3\}$.
\end{Lem}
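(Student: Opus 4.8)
The plan is to prove the slightly sharper statement that the multiset $\{a_1,a_2,a_3\}$ is uniquely determined by the pair of numbers $N:=2^{a_1}+2^{a_2}+2^{a_3}$ and $S:=a_1+a_2+a_3$. First I would order both triples, say $a_1\le a_2\le a_3$ and $b_1\le b_2\le b_3$, and then cancel the common entries of the two multisets one at a time: since each cancellation subtracts the same quantity $2^c$ from $N$ and the same $c$ from $S$ on both sides, I am reduced to the case in which $\{a_1,a_2,a_3\}$ and $\{b_1,b_2,b_3\}$ are \emph{disjoint} multisets of a common cardinality $k\in\{0,1,2,3\}$ still satisfying both equalities (now with $3-k$ summands on each side). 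If $k=0$ the original multisets agree and we are done, so it suffices to rule out $k\in\{1,2,3\}$.

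The case $k=1$ is immediate, since $2^{a_1}=2^{b_1}$ forces $a_1=b_1$, contradicting disjointness. For $k=2$ one uses that a sum of powers of $2$ with at most two terms has an essentially unique such expression: if $a_1\neq a_2$ then $2^{a_1}+2^{a_2}$ is the integer whose binary digits equal $1$ exactly in positions $a_1$ and $a_2$, while if $a_1=a_2$ then $2^{a_1}+2^{a_2}=2^{a_1+1}$; either way $\{a_1,a_2\}=\{b_1,b_2\}$, contradicting disjointness. (Here the hypothesis on $S$ is not yet needed.)

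The heart of the matter is $k=3$. Here I would argue with the $2$-adic valuation $v_2$. Since the two multisets are disjoint their minima differ; relabel so that $a_1<b_1$. Then $2^{b_1}\mid N$, hence $v_2(N)\ge b_1>a_1$. On the other hand, if $a_1<a_2$ then $N=2^{a_1}\bigl(1+2^{a_2-a_1}+2^{a_3-a_1}\bigr)$ with the bracket odd, so $v_2(N)=a_1$, a contradiction; thus $a_1=a_2$, and $a_3=a_1$ is excluded for the same reason, so $a_1=a_2<a_3$ and $N=2^{a_1+1}+2^{a_3}$. In particular the binary expansion of $N$ has at most two digits equal to $1$, hence so does that of $2^{b_1}+2^{b_2}+2^{b_3}$; listing the shapes of an ordered triple $b_1\le b_2\le b_3$ for which $2^{b_1}+2^{b_2}+2^{b_3}$ has at most two binary $1$'s (namely $b_1=b_2=b_3$; $b_1=b_2<b_3$; and $b_1<b_2=b_3$ — the all–distinct shape produces three $1$'s), I would match the positions of the binary $1$'s on the two sides. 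In each resulting configuration either the positions force $a_1=b_1$, contradicting disjointness, or comparing the positions together with $a_1+a_2+a_3=b_1+b_2+b_3$ yields a numerical contradiction of the type $3a_1+2=3a_1+3$. This exhausts the subcases, so $k=3$ is impossible, and therefore $k=0$.

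The main obstacle is exactly this $k=3$ analysis. The statement genuinely fails without the hypothesis $a_1+a_2+a_3=b_1+b_2+b_3$ (for instance $2^{0}+2^{0}+2^{2}=2^{1}+2^{1}+2^{1}$ while $\{0,0,2\}\neq\{1,1,1\}$), so that hypothesis must be used, and the bookkeeping of the binary digit positions has to be organised carefully enough to see precisely where it intervenes; everything else is elementary.
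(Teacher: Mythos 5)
Your proposal is correct, and it takes a genuinely different route from the paper's. I checked your $k=3$ case analysis to the end: with $a_1=a_2<a_3$ forced by the valuation argument, matching binary digit positions gives $a_1=b_1$ (contradicting disjointness) when $b_1=b_2<b_3$, the contradiction $3a_1+2=3a_1+3$ when $b_1=b_2=b_3$, and, when $b_1<b_2=b_3$, the sum hypothesis gives $a_1=b_2$ while the positions give $b_1=a_1+1=b_2+1$, contradicting $b_1<b_2$ --- so every configuration terminates as you claim (the last one is not literally of the type $3a_1+2=3a_1+3$, but it is the advertised numerical contradiction). The paper instead works from the \emph{top} exponent: ordering both triples with $a_3\ge b_3$, the bound $2^{a_3}\le 2^{b_1}+2^{b_2}+2^{b_3}\le 3\cdot 2^{b_3}$ forces $a_3\in\{b_3,b_3+1\}$; the case $a_3=b_3$ reduces to two terms (your $k=2$ step, which the paper dismisses as trivial), and the case $a_3=b_3+1$ successively forces $b_2=b_3$ and $b_1=a_1+1=a_2+1$, after which the sum hypothesis yields $a_1=b_2\ge b_1=a_1+1$, a contradiction. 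You work from the \emph{bottom} exponent via $v_2(N)\ge b_1>a_1$ and then count binary $1$'s. The paper's argument is shorter (two cases versus your six position-matchings), while yours is more systematic: the cancellation step isolates exactly where uniqueness of binary expansions can fail, and your example $2^0+2^0+2^2=2^1+2^1+2^1$ cleanly documents that the hypothesis $a_1+a_2+a_3=b_1+b_2+b_3$ cannot be dropped. Notably, both proofs invoke that hypothesis only once, in the final step of the hardest configuration, and in essentially the same way.
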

\begin{proof}
Assume, without loss of generality, that $a_1\leq a_2\leq a_3$, $b_1\leq b_2\leq b_3$ and $a_3\geq b_3$. Clearly $a_3\leq b_3+1$, otherwise 
\[
2^{a_3}> 3\cdot 2^{b_3}\geq 2^{b_1}+2^{b_2}+2^{b_3},
\]
a contradiction. Therefore, either $a_3=b_3$ or $a_3=b_3+1$. In the former case the result follows trivially. In the latter, we have
\[
2^{a_1}+2^{a_2}+2^{b_3}=2^{b_1}+2^{b_2}
\]
which implies that $b_3=b_2$, hence $2^{b_1}=2^{a_1}+2^{a_2}$ and therefore $b_1=a_1+1=a_2+1$. Now, we have $a_1+a_1+b_2+1=a_1+1+b_2+b_2$, which implies that $a_1=b_2$, which contradicts the fact that $b_2\geq b_1$.
\end{proof}

\begin{Prop}
\label{prop:3f}
Let $\Gamma$ and $\Gamma'$ be two stable $n$-legged trees with 3 vertices such that $f(C(\Gamma))=C(\Gamma')$. Then there exists a permutation  $\sigma$ of $I_n$ such that $\sigma(\Gamma)\simeq\Gamma'$ and $f(e)=\sigma(e)$ for every $e\in E(\Gamma)$.
\end{Prop}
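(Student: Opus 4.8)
The plan is to reduce the whole statement to identifying the three leg numbers of $\Gamma$ with those of $\Gamma'$. A tree with $3$ vertices has $2$ edges, so both $\Gamma$ and $\Gamma'$ are paths: write the vertices of $\Gamma$ as $v_1,v_2,v_3$ with $v_2$ the inner vertex, $e_1$ the edge joining $v_1$ to $v_2$ and $e_2$ the edge joining $v_2$ to $v_3$, and similarly $v_1',v_2',v_3'$ and $e_1',e_2'$ for $\Gamma'$. Stability forces $\l(v_1),\l(v_3),\l(v_1'),\l(v_3')\ge 2$ and $\l(v_2),\l(v_2')\ge 1$. As explained before Proposition~\ref{prop:count}, $f$ restricts to a bijection $f\col E(\Gamma)\to E(\Gamma')$; by relabelling $v_1'$ and $v_3'$ if necessary (which leaves $\Gamma'$ unchanged as a legged tree) I may assume $f(e_1)=e_1'$ and $f(e_2)=e_2'$. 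Granting for the moment that $\l(v_i)=\l(v_i')$ for $i=1,2,3$, pick any permutation $\sigma$ of $I_n$ carrying $L(v_i)$ bijectively onto $L(v_i')$ for each $i$; then $v_i\mapsto v_i'$ is an isomorphism $\sigma(\Gamma)\to\Gamma'$ sending $e_i$ to $e_i'=f(e_i)$, so $\sigma(\Gamma)\simeq\Gamma'$ and $f(e)=\sigma(e)$ for every $e\in E(\Gamma)$, as required (undoing the possible relabelling of $v_1',v_3'$ only changes which $L(v_j')$ each $L(v_i)$ is sent to, and does not affect the conclusion $f(e)=\sigma(e)$).

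To prove $\l(v_i)=\l(v_i')$ I would collect three constraints. Since $f$ is an automorphism, it matches the number of $3$-dimensional cones whose closure contains $C(\Gamma)$ with the same number for $C(\Gamma')$; computing both sides with Proposition~\ref{prop:count} ($\val=1$ at $v_1,v_3$ and $\val=2$ at $v_2$) and cancelling the contributions $-(\l(v)+\val(v)+1)$, whose sum over the three vertices only depends on $n$, leaves
\[
2^{\l(v_1)}+2^{\l(v_2)+1}+2^{\l(v_3)}=2^{\l(v_1')}+2^{\l(v_2')+1}+2^{\l(v_3')}.
\]
Because the three exponents on each side add up to $n+1$, Lemma~\ref{lem:count} gives the multiset identity $\{\l(v_1),\l(v_2)+1,\l(v_3)\}=\{\l(v_1'),\l(v_2')+1,\l(v_3')\}$. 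Next, for $i=1,2$ the contraction $\Gamma/\{e_i\}$ is a stable $n$-legged tree with two vertices, and $f(C(\Gamma/\{e_i\}))=C(\Gamma'/\{e_i'\})$ by the face compatibility recorded in the preliminaries, so Corollary~\ref{cor:2f} applies to each of them and, via the characterisation of such permutations recalled after Proposition~\ref{prop:count}, yields
\[
\{\l(v_1)+\l(v_2),\,\l(v_3)\}=\{\l(v_1')+\l(v_2'),\,\l(v_3')\},\qquad\{\l(v_1),\,\l(v_2)+\l(v_3)\}=\{\l(v_1'),\,\l(v_2')+\l(v_3')\}.
\]

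Finally I would close the argument by a short case analysis on these three identities, using $\l(v_1)+\l(v_2)+\l(v_3)=\l(v_1')+\l(v_2')+\l(v_3')=n$ and the stability inequalities. Writing $x_i=\l(v_i)$ and $y_i=\l(v_i')$, the second identity gives $x_3\in\{y_1+y_2,y_3\}$ and the third gives $x_1\in\{y_1,y_2+y_3\}$; the combination $x_3=y_1+y_2$ together with $x_1=y_2+y_3$ is impossible, since adding these and using the total $n$ forces $y_2=-x_2<0$. One is therefore reduced to the two branches ``$x_3=y_3$'' and ``$x_3=y_1+y_2$ and $x_1=y_1$'', and in each branch substituting back into the identity coming from Proposition~\ref{prop:count} leaves only the solution $x_i=y_i$, every rival alternative being ruled out because it forces $x_3=1$ (contradicting stability at the leaf $v_3$) or $y_1=1$ (contradicting stability at the leaf $v_1'$). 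I expect this last bookkeeping to be the only slightly delicate point: it is entirely elementary, but one must check that each sub-branch is genuinely closed by a stability inequality rather than silently left open, and take a little care with multiset multiplicities when two of the entries coincide. Once $\l(v_i)=\l(v_i')$ is established, the construction of $\sigma$ in the first paragraph completes the proof.
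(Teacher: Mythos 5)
Your proposal is correct and rests on exactly the same ingredients as the paper's own proof — the codimension-one cone count of Proposition~\ref{prop:count} combined with Lemma~\ref{lem:count}, and Corollary~\ref{cor:2f} applied to the two edge contractions — the only difference being organizational: you normalize $f(e_1)=e_1'$ by relabelling at the outset and then run a single case analysis on three multiset identities, where the paper instead interleaves the two possibilities for $f(e_1)$ with the two outcomes of Lemma~\ref{lem:count}. I verified that every branch of your final bookkeeping does close as you predicted, via the stability bounds $\l(v_1'),\l(v_3')\geq 2$ (equivalently $\l(v_1),\l(v_3)\geq2$) or the positivity of $\l(v_2),\l(v_2')$, so no sub-branch is left open.
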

\begin{proof}
Let $v_1$, $v_2$, $v_3$ be the vertices of $\Gamma$ and $v_1'$, $v_2'$, $v_3'$ the ones of $\Gamma'$, where $v_2$ and $v_2'$ are not leaves. Let $e_i$ be the edge between $v_i$ and $v_{i+1}$ and $e_i'$ be the one between $v_i'$ and $v_{i+1}'$, for $i=1,2$. By the same argument in the proof of Corollary \ref{cor:2f} we get that 
\[
2^{\l(v_1)}+2^{\l(v_2)+1}+2^{\l(v_3)}=2^{\l(v_1')}+2^{\l(v_2')+1}+2^{\l(v_3')}.
\]
Using Lemma \ref{lem:count} we conclude that, up to relabeling $v_1$ and $v_3$, either $\l(v_i)=\l(v_i')$ for $i=1,2,3$ or 
\begin{equation}
\label{eq:L}
\l(v_1)=\l(v_2')+1,\quad \l(v_2)=\l(v_1')-1, \quad \l(v_3)=\l(v_3').
\end{equation} \par
In the former case, we just have to prove that $f(e_1)=e_1'$. If $f(e_1)=e_2'$, then contracting $e_1$ and $f(e_1)=e_2'$ we get that $f(C(\Gamma/\{e_1\}))=C(\Gamma'/\{e_2'\})$ and by Corollary \ref{cor:2f} we get that either $\l(v_3)=\l(v_1')$ or $\l(v_3)=\l(v_2')+\l(v_3')$. The second equality can not occur because $\l(v_3)=\l(v_3')$ and $\l(v_2')>0$, hence $\l(v_3)=\l(v_1')=\l(v_1)$ and then switching $v_1'$ and $v_3'$ we get the result.\par
   In the latter case we argue as follows.  We have two cases. In the first case $f(e_1)=e_1'$. Contracting $e_2$ and $f(e_2)=e_2'$, we get that $f(C(\Gamma/\{e_2\}))=C(\Gamma'/\{e_2'\})$ and by Corollary \ref{cor:2f} we get that either $\l(v_1)=\l(v_1')$ or $\l(v_1)=\l(v_2')+\l(v_3')$. The former, together with Equation \eqref{eq:L} implies that $\l(v_2)=\l(v_2')$ and the result follows, while the latter is clearly impossible because $\l(v_3')\geq 2$.	In the second case $f(e_1)=e_2'$. Using Corollary \ref{cor:2f} for both $f(C(\Gamma/\{e_2\}))=C(\Gamma'/\{e_1'\})$ and $f(C(\Gamma/\{e_1\}))=C(\Gamma'/\{e_2'\})$, we get that (excluding the obvious impossible cases) $\l(v_1)=\l(v_3')$ and $\l(v_3)=\l(v_1')$. This, together with Equation \eqref{eq:L}, implies that $\l(v_i)=\l(v_i')$ for $i=1,2,3$ and $\l(v_1)=\l(v_3)$. In particular, switching $v_1'$ and $v_3'$ we get the result.
\end{proof}

\begin{Prop}
\label{prop:leaf}
Let $\Gamma$ and $\Gamma'$ be two stable $n$-legged trees such that $f(C(\Gamma))=C(\Gamma')$. Then there exist leaves $v$, $v'$ of $\Gamma$, $\Gamma'$ such that $\l(v)=\l(v')$ and $f(E(v))=E(v')$. 
\end{Prop}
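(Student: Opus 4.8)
The plan is to attach to each edge $e$ of a stable $n$-legged tree a combinatorial invariant preserved by $f$, and then to show that the edges realizing its minimum are precisely certain leaf edges. For an edge $e$ of a stable $n$-legged tree $\Delta$, contracting all the other edges yields a two-vertex tree $\Delta/(E(\Delta)\setminus\{e\})$; equivalently, deleting $e$ breaks $\Delta$ into two subtrees, carrying $\alpha$ and $n-\alpha$ legs, and I set $a(e):=\min\{\alpha,n-\alpha\}$. The first step is to check that $f$ preserves $a$: taking $S=E(\Gamma)\setminus\{e\}$, one has $f(C(\Gamma/S))=C(\Gamma'/f(S))$ with $f(S)=E(\Gamma')\setminus\{f(e)\}$, and since $\Gamma/S$ and $\Gamma'/f(S)$ are two-vertex trees, Corollary \ref{cor:2f} forces them to have the same multiset of leg-numbers, so $a(f(e))=a(e)$. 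In particular the minimum value $a^*:=\min_{e}a(e)$ is the same for $\Gamma$ and $\Gamma'$.

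The core of the argument is the following purely combinatorial claim, which I would prove by contradiction: in any stable $n$-legged tree $\Delta$ with at least two vertices, if an edge $e$ satisfies $a(e)=a^*$, then the side of $e$ carrying $a^*$ legs consists of a single vertex $w$, which is then a leaf of $\Delta$ with $\l(w)=a^*$ and $E(w)=\{e\}$. Indeed, suppose the $a^*$-side $T$ of $e$ has at least two vertices. As a subtree, $T$ has at least two leaves, and since only one vertex of $T$ meets $e$, some leaf $u$ of $T$ is a leaf of $\Delta$ as well, so $\l(u)\geq 2$ by stability. A short case analysis — according to whether the endpoint of $e$ lying in $T$ is itself a leaf of $T$ — shows that $T$ carries strictly more than $\l(u)$ legs, hence $\l(u)<a^*$; but the edge incident to $u$ has invariant at most $\l(u)<a^*$, contradicting the minimality of $a^*$.

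Granting the claim, the proposition follows at once: choose $e_0\in E(\Gamma)$ with $a(e_0)=a^*$, so by the claim there is a leaf $v$ of $\Gamma$ with $E(v)=\{e_0\}$ and $\l(v)=a^*$; applying the claim to $\Gamma'$ and the edge $f(e_0)$, whose invariant is $a^*$ and hence minimal for $\Gamma'$, yields a leaf $v'$ of $\Gamma'$ with $E(v')=\{f(e_0)\}$ and $\l(v')=a^*$. Then $\l(v)=\l(v')$ and $f(E(v))=\{f(e_0)\}=E(v')$, as required.

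The step I expect to be the main obstacle is the combinatorial claim: when $a^*\geq 3$ the $a^*$-leg side of an edge need not a priori be a single vertex, so one has to use stability to locate inside it a leaf of $\Delta$ with still fewer legs, contradicting minimality — getting the bookkeeping of the case analysis right, in particular the degenerate case where the offending subtree $T$ has exactly two vertices, is where the care lies. A minor point is to exclude (or handle by convention) the trivial situation in which $\Gamma$ has a single vertex and hence no leaves at all.
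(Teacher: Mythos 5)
Your argument is correct and is essentially the paper's proof in different packaging: both contract all edges but one to reduce to Corollary \ref{cor:2f}, and then run a minimality argument in which stability (every leaf carries at least two legs, every valence-two vertex at least one) forces the minimal-leg side of the distinguished edge to be a single leaf. The only real difference is that you isolate this as an $f$-independent combinatorial claim about the edge invariant $a(e)$, applied separately to $\Gamma$ and $\Gamma'$, whereas the paper minimizes $\ell$ over the leaves of $\Gamma$ and obtains the symmetric inequality by running the same argument for $f^{-1}$.
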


\begin{proof}
 Let $v$ be a leaf of $\Gamma$ and $e$ be the only edge attached to $v$. Contracting all edges of $\Gamma$ except $e$, we get a stable $n$-legged tree with 2 vertices $v$ and $\overline{v}$. Contracting all edges of $\Gamma'$ except $f(e)$, by Corollary \ref{cor:2f} we must get a stable $n$-legged tree with $2$-vertices $w'$ and $\overline{w}'$, such that $\l(v)=\l(w')$ (up to switching $w'$ and $\overline{w}'$). If the vertex of $f(e)$ that contracts to $w'$ is (respectively, is not) a leaf, then there exists a leaf $v'$ of $\Gamma'$ with $\l(v')=\l(w')$ (respectively, $\l(v')<\l(w')$). In particular, by the same argument applied to $f^{-1}$, for any given leaf $v'$ of $\Gamma'$, there exists a leaf $w$ in $\Gamma$ such that $\l(w)\leq \l(v')$.\par
    In the setting above, if we choose $v$ to be a leaf of $\Gamma$ with the minimum number of legs attached to it, then it follows that the vertex $v'$ attached to $f(e)$ that contracts to $w'$ is also a leaf, otherwise there would be a leaf $w$ of $\Gamma$ such that $\l(w)<\l(v)$, a contradiction. Note that we also get $\l(v)=\l(v')$.
\end{proof}

\begin{Prop}
\label{prop:chain}
Let $\Gamma$ and $\Gamma'$ be two stable $n$-legged chains with 4 vertices such that $f(C(\Gamma))=C(\Gamma')$. Then there exists a permutation $\sigma$ of $I_n$ such that $\sigma(\Gamma)\simeq\Gamma'$ and $f(e)=\sigma(e)$ for every $e\in E(\Gamma)$. In particular $f$ takes the edge of $\Gamma$ attached to no leaf to the edge of $\Gamma'$ attached to no leaf.
\end{Prop}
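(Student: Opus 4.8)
The plan is to first use Proposition~\ref{prop:leaf} to anchor one leaf-edge of $\Gamma$, and then to pin down the whole edge-bijection $f\colon E(\Gamma)\to E(\Gamma')$ by contracting single edges and applying Proposition~\ref{prop:3f} to the resulting $3$-vertex chains. Write $v_1,v_2,v_3,v_4$ for the vertices of $\Gamma$ in chain order and $e_i$ for the edge joining $v_i$ and $v_{i+1}$, so that $v_1,v_4$ are the leaves and $e_2$ is the unique edge attached to no leaf; use primes for $\Gamma'$. Apply Proposition~\ref{prop:leaf}: there are leaves $v,v'$ of $\Gamma,\Gamma'$ with $\l(v)=\l(v')$ and $f(E(v))=E(v')$, and since a leaf of a chain carries a single incident edge this says $f$ sends a leaf-edge of $\Gamma$ to a leaf-edge of $\Gamma'$. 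Reversing the labelling of the chain $\Gamma$ and/or $\Gamma'$ if necessary — this changes neither the legged trees, nor $f$, nor the notion of ``edge attached to no leaf'', so it does not affect the statement to be proved — I may assume $f(e_1)=e_1'$ (and $\l(v_1)=\l(v_1')$). Then $f$ restricts to a bijection $\{e_2,e_3\}\to\{e_2',e_3'\}$, leaving two cases: (A) $f(e_2)=e_2'$ and $f(e_3)=e_3'$; (B) $f(e_2)=e_3'$ and $f(e_3)=e_2'$.

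The heart of the argument is excluding Case (B). Assume (B). Contracting $e_3$ gives $f(C(\Gamma/\{e_3\}))=C(\Gamma'/\{e_2'\})$, where $\Gamma/\{e_3\}$ is the $3$-chain on $v_1,v_2,\overline{v_{34}}$ with edges $e_1,e_2$ and $\Gamma'/\{e_2'\}$ is the $3$-chain on $v_1',\overline{v_{23}'},v_4'$ with edges $e_1',e_3'$. By Proposition~\ref{prop:3f}, $f$ agrees on edges with an isomorphism of the underlying trees carrying each vertex to a vertex with the same number of legs; since the image of $e_1=v_1v_2$ is $e_1'=v_1'\overline{v_{23}'}$ and the image of $e_2=v_2\overline{v_{34}}$ is $e_3'=\overline{v_{23}'}v_4'$, the common vertex $v_2$ must go to $\overline{v_{23}'}$, forcing $\l(v_2)=\l(v_2')+\l(v_3')$. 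Contracting instead $e_2$ gives $f(C(\Gamma/\{e_2\}))=C(\Gamma'/\{e_3'\})$, and the same bookkeeping forces $\overline{v_{23}}\mapsto v_2'$, i.e.\ $\l(v_2)+\l(v_3)=\l(v_2')$. But $v_2',v_3'$ (middle vertices of the stable $4$-chain $\Gamma'$) and $v_3$ (a middle vertex of $\Gamma$) each carry at least one leg by stability, so $\l(v_2)=\l(v_2')+\l(v_3')>\l(v_2')=\l(v_2)+\l(v_3)\geq\l(v_2)$, a contradiction. Hence we are in Case (A).

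It remains to produce $\sigma$ in Case (A). Contracting $e_1$ and applying Proposition~\ref{prop:3f} to $f(C(\Gamma/\{e_1\}))=C(\Gamma'/\{e_1'\})$, the edge identifications $e_2\mapsto e_2'$ and $e_3\mapsto e_3'$ force $v_3\mapsto v_3'$ and $v_4\mapsto v_4'$, so $\l(v_3)=\l(v_3')$ and $\l(v_4)=\l(v_4')$; contracting $e_3$ and arguing in the same way gives $\l(v_1)=\l(v_1')$ and $\l(v_2)=\l(v_2')$. Thus $\l(v_i)=\l(v_i')$ for all $i$, so I may choose a permutation $\sigma$ of $I_n$ with $\sigma(L(v_i))=L(v_i')$ for each $i$; then the identity on the common underlying chain is an isomorphism $\sigma(\Gamma)\to\Gamma'$ sending $v_i\mapsto v_i'$ and $e_i\mapsto e_i'$, whence $\sigma(\Gamma)\simeq\Gamma'$ and $\sigma(e_i)=e_i'=f(e_i)$ for every $i$; in particular $f(e_2)=e_2'$, which is the final assertion. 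I expect the main obstacle to be the routine but error-prone bookkeeping of which vertex corresponds to which under the isomorphisms supplied by Proposition~\ref{prop:3f} after each contraction (and keeping the initial chain-reversal relabelling honest), rather than anything conceptually deep.
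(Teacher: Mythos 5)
Your proof is correct and follows essentially the same route as the paper: anchor a leaf edge via Proposition~\ref{prop:leaf}, split into the two cases according to whether $f$ preserves or swaps $\{e_2,e_3\}$, and use single-edge contractions together with Proposition~\ref{prop:3f} to compare leg counts. Your elimination of the swap case is a slight streamlining (two contractions and the inequality $\l(v_2)>\l(v_2')\geq\l(v_2)$, versus the paper's three contractions leading to $\l(v_3)=0$), but the argument is the same in substance.
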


\begin{proof}
Assume that $v_i$ (respectively, $v_i'$) are the vertices of $\Gamma$ (respectively, $\Gamma'$) for $i=1,2,3,4$, and $e_i$ (respectively, $e_i'$) the edge connecting $v_i$ and $v_{i+1}$ (respectively, $v_i'$ and $v_{i+1}'$), for $i=1,2,3$. By Proposition \ref{prop:leaf} we can assume, without loss of generality, that $\l(v_1)=\l(v_1')$ and $f(e_1)=e_1'$. Then we have two cases.\par
 In the first case $f(e_2)=e_2'$ and $f(e_3)=e_3'$, hence contracting $e_1$ and $f(e_1)=e_1'$, and applying Proposition \ref{prop:3f}, we get that $\l(v_3)=\l(v_3')$ and $\l(v_4)=\l(v_4')$ hence $\l(v_2)=\l(v_2')$ and the result follows.\par
  In the second case $f(e_2)=e_3'$ and $f(e_3)=e_2'$. Contracting $e_1$ and $f(e_1)=e_1'$, and applying Proposition \ref{prop:3f}, we get that $\l(v_3)=\l(v_3')$, $\l(v_4)=\l(v_1')+\l(v_2')$ and $\l(v_4')=\l(v_1)+\l(v_2)$. These equalities translates to $\l(v_1')=\l(v_1)$, $\l(v_2')=\l(v_4)-\l(v_1)$, $\l(v_3')=\l(v_3)$ and $\l(v_4')=\l(v_1)+\l(v_2)$. Now contracting $e_2$ and $f(e_2)=e_3'$, we get that $\l(v_2)+\l(v_3)=\l(v_2')$, which implies $\l(v_2)+\l(v_3)=\l(v_4)-\l(v_1)$. Contracting $e_3$ and $f(e_3)=e_2'$, we get that $\l(v_2)=\l(v_2')+\l(v_3')$, which implies that $\l(v_2)=\l(v_4)-\l(v_1)+\l(v_3)$. Clearly this yields $\l(v_3)=0$, contradiction.
\end{proof}

\begin{Prop}
\label{prop:sigma}
Let $\Gamma$ and $\Gamma'$ be two stable $n$-legged trees such that $f(C(\Gamma))=C(\Gamma')$. Then there exists a permutation $\sigma$ of $I_n$ such that $\sigma(\Gamma)\simeq\Gamma'$ and $f(e)=\sigma(e)$ for every $e\in E(\Gamma)$.
\end{Prop}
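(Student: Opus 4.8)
The plan is to argue by induction on the number $m=|E(\Gamma)|$ of edges of $\Gamma$. The cases $m\le 1$ are immediate: for $m=0$ there is a unique one-vertex stable $n$-legged tree, and for $m=1$ this is Corollary \ref{cor:2f} (the condition on edges being automatic, as there is a single edge). The case $m=2$ (then $\Gamma$ is a $3$-vertex chain) is Proposition \ref{prop:3f}, and the case $m=3$ with $\Gamma$ a chain is Proposition \ref{prop:chain}; the one remaining small configuration, $m=3$ with $\Gamma$ a star, is covered by the inductive step below. So assume $m\ge 3$ and that the statement holds for all stable $n$-legged trees with fewer than $m$ edges.

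By Proposition \ref{prop:leaf} there are leaves $v$ of $\Gamma$ and $v'$ of $\Gamma'$ with $\l(v)=\l(v')$ and $f(e_0)=e_0'$, where $E(v)=\{e_0\}$ and $E(v')=\{e_0'\}$; write $\overline v$ (resp. $\overline v'$) for the other endpoint of $e_0$ (resp. $e_0'$). Then $\overline\Gamma:=\Gamma/\{e_0\}$ and $\overline\Gamma':=\Gamma'/\{e_0'\}$ are stable $n$-legged trees with $m-1$ edges (a leaf carries at least two legs, so the valence-plus-legs count of the merged vertex $[\overline v]$, image of $v$ and $\overline v$, stays $\ge 3$), one has $f(C(\overline\Gamma))=C(\overline\Gamma')$, and the edge map of $f$ on $C(\overline\Gamma)$ is the restriction of the one on $C(\Gamma)$ to $E(\Gamma)\setminus\{e_0\}$. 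By the inductive hypothesis there is a permutation $\overline\sigma$ of $I_n$ with $\overline\sigma(\overline\Gamma)\simeq\overline\Gamma'$, via an isomorphism $\overline g$, and $f(a)=\overline\sigma(a)=\overline g_E(a)$ for every $a\in E(\Gamma)\setminus\{e_0\}$.

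The heart of the matter is to show that $\overline\sigma$ can be chosen so that moreover $\overline\sigma(L(v))=L(v')$; equivalently (since $\overline g$ preserves splits, $\l(v)=\l(v')$, and a one-vertex leg set is nonempty as $\l(v)\ge 2$) that $\overline g$ carries $[\overline v]$ onto the merged vertex $[\overline v']$ of $\overline\Gamma'$. Granting this, the conclusion for $\Gamma$ follows by "un-contraction": since $v$ is a leaf, $\Gamma$ is obtained from $\overline\Gamma$ by splitting off from the legs of $[\overline v]$ a new leaf carrying $L(v)$ and joined to the rest by $e_0$, and likewise $\Gamma'$ from $\overline\Gamma'$ using $L(v')$; as $\overline g([\overline v])=[\overline v']$ and $\overline\sigma(L(v))=L(v')$, the isomorphism $\overline g$ extends uniquely over this operation to an isomorphism $\overline\sigma(\Gamma)\simeq\Gamma'$ sending $e_0$ to $e_0'=f(e_0)$, which together with $f(a)=\overline\sigma(a)$ for $a\ne e_0$ is exactly the statement. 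For the adjustment it is enough to prove $\overline\sigma(L(v))\subseteq L_{\overline\Gamma'}([\overline v'])=L(v')\cup L(\overline v')$, for then one may postcompose $\overline\sigma$ with a permutation of $I_n$ preserving the leg set of every vertex of $\overline\Gamma'$ (hence fixing $\overline\Gamma'$ and the edge map of $f$ on $\overline\Gamma$) that carries $\overline\sigma(L(v))$ onto $L(v')$ inside $L(v')\cup L(\overline v')$. The inclusion in turn, and $\overline g([\overline v])=[\overline v']$, reduce to identifying the edges of $\Gamma'$ at $\overline v'$ with the $f$-images of the edges of $\Gamma$ at $\overline v$; this is where the remaining hypotheses are used — Proposition \ref{prop:3f} on the $3$-vertex chains $\Gamma/(E(\Gamma)\setminus\{e_0,a\})$ for edges $a$ at $\overline v$, Proposition \ref{prop:chain} on the $4$-vertex chains $\Gamma/(E(\Gamma)\setminus\{e_0,a,b\})$ with $b$ a neighbour of $a$, and the present Proposition applied inductively to the star $\Gamma/(E(\Gamma)\setminus E(\overline v))$ — comparing the resulting local isomorphisms with $\overline\sigma$ and intersecting the one-sided constraints they impose on $\overline\sigma(L(v))$.

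I expect the main obstacle to be precisely this last step, the "local compatibility" of $f$ around the contracted leaf $v$, i.e. showing $\overline g([\overline v])=[\overline v']$: contracting edges can a priori create incidences in $\Gamma'$ that were absent before contracting, so the $3$- and $4$-vertex statements, the inductive star case, and a valence count obtained by symmetry from $f^{-1}$ must be combined carefully to rule this out. Once the incidence-matching near the leaf is in place, everything else is the routine un-contraction bookkeeping sketched above.
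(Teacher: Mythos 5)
Your setup coincides with the paper's: induct on the number of edges, use Proposition \ref{prop:leaf} to find matched leaves $v,v'$ with $f(e_0)=e_0'$, contract $e_0$ and $e_0'$, apply the induction hypothesis to get $\overline\sigma$, and then reduce everything to showing that $\overline\sigma$ carries the merged vertex $[\overline v]$ to the merged vertex $[\overline v']$. The ``un-contraction bookkeeping'' and the post-composition adjustment of $\overline\sigma$ inside $L(v')\cup L(\overline v')$ are fine. But the one step you defer --- proving $\overline g([\overline v])=[\overline v']$ --- is the entire content of the proposition beyond its base cases, and your sketch of it does not close. You propose to handle it by comparing $f$ on $3$-vertex chains, $4$-vertex chains, and a star contraction. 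The $4$-vertex chain argument (Proposition \ref{prop:chain} forces the middle edge to the middle edge) does dispose of the situation where $[\overline v]$ and $\overline g^{-1}([\overline v'])$ are joined in $\Gamma$ by a path of length at least $2$; this is exactly what the paper does. The problem is the residual case where they are joined by a \emph{single} edge $e_2$: there the $4$-vertex chain obstruction is vacuous, the $3$-vertex chains $\Gamma/(E(\Gamma)\setminus\{e_0,a\})$ only see leg counts of the two components cut out by $a$ and cannot detect which vertex of $\Gamma'$ the edge $f(a)$ is actually attached to, and your star contraction $\Gamma/(E(\Gamma)\setminus E(\overline v))$ is circular precisely when $\Gamma$ is itself a star centered at $\overline v$ (then nothing is contracted and the ``induction'' invokes the statement being proved).

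The paper needs two further ingredients to kill this last case, neither of which appears in your proposal. First, a structural claim --- obtained by contracting the whole connected component of $\Gamma\setminus\{e_0,e_2\}$ containing $\overline v$ and applying the induction hypothesis, switching $f$ with $f^{-1}$ if that set is empty --- that one may assume $E(v_2')=\{e_0',e_2'\}$. Second, and crucially, Proposition \ref{prop:count}: equating the numbers of one-step degenerations of $\Gamma$ and $\Gamma'$ yields $\l(v_2')+1=\l(v_2)+\val(v_2)-1$, and combining this with the stability-forced lower bound of $2(\val(v_2)-2)$ legs on the remaining branches at $v_2$ gives $n\ge n+\val(v_2)-2$, hence $\val(v_2)=2$ and $\Gamma$ is a $3$-vertex tree already covered by Proposition \ref{prop:3f}. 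Your phrase ``a valence count obtained by symmetry from $f^{-1}$'' gestures in this direction but is not this argument, and without Proposition \ref{prop:count} (or some substitute quantitative input) the constraints you list do not rule out the length-one displacement. As written, the proof has a genuine gap at its central step.
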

\begin{proof}
		The proof is by induction on the number of edges of $\Gamma$. If $\Gamma$ has one or two edges, then the result is Corollary \ref{cor:2f} and Proposition \ref{prop:3f}. Assume now that $\Gamma$ has at least $3$ edges. By Proposition \ref{prop:leaf} there exist leaves $v_1$ and $v_1'$ of $\Gamma$ and $\Gamma'$ such that $\l(v_1)=\l(v_1')$ and $f(e_1)=e_1'$ where $e_1$ and $e_1'$ are the unique edges attached to $v_1$ and $v_1'$. By the induction hypothesis, upon contracting $e_1$ and $f(e_1)=e_1'$ we get that there exists $\sigma$ such that $\sigma(\Gamma/\{e\})\simeq\Gamma'/\{e'\}$ and $f(e)=\sigma(e)$ for every $e\in E(\Gamma/\{e_1\})=E(\Gamma)\setminus\{e_1\}$. Let $v_2$ and $v_2'$be  the other vertices connected to $e_1$ and $e_1'$. Since $\Gamma$ is a tree, there exists a unique path from $v_2$ to $\sigma^{-1}(v_2')$.  If $\sigma(v_2)=v_2'$, then the result follows. Otherwise, we have 2 cases.\par

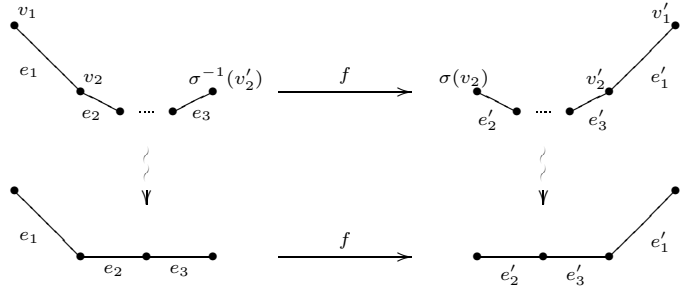
\begin{figure}[h]
\[
\begin{xy} <25pt,0pt>:
(0,0)*{\scriptstyle\bullet}="b"; 
(-1,1)*{\scriptstyle\bullet}="a";
(2,0)*{\scriptstyle\bullet}="e";
(0.6,-0.3)*{\scriptstyle\bullet}="c";
(1.4,-0.3)*{\scriptstyle\bullet}="d";
(6,0)*{\scriptstyle\bullet}="e1"; 
(9,1)*{\scriptstyle\bullet}="a1";
(8,0)*{\scriptstyle\bullet}="b1";
(6.6,-0.3)*{\scriptstyle\bullet}="d1";
(7.4,-0.3)*{\scriptstyle\bullet}="c1";
{\ar^{f}@{->}(3,0)*{};(5,0)*{}};
{\ar_{e_1}@{-}"a"*{};"b"*{}};
{\ar_{e_2}@{-}"b"*{};"c"*{}};
{\ar_{e_3}@{-}"d"*{};"e"*{}};
{\ar@{..}"c"+(0.3,0)*{};"d"+(-0.3,0)*{}};
{\ar^{e_1'}@{-}"a1"*{};"b1"*{}};
{\ar^{e_3'}@{-}"b1"*{};"c1"*{}};
{\ar^{e_2'}@{-}"d1"*{};"e1"*{}};
{\ar@{..}"c1"+(-0.3,0)*{};"d1"+(0.3,0)*{}};
"a"+(0.2,0.2)*{\scriptstyle{v_1}};
"b"+(0.2,0.2)*{\scriptstyle{v_2}};
"e"+(0.2,0.2)*{\scriptstyle{\sigma^{-1}(v_2')}};
"a1"+(-0.2,0.2)*{\scriptstyle{v_1'}};
"b1"+(-0.2,0.2)*{\scriptstyle{v_2'}};
"e1"+(-0.2,0.2)*{\scriptstyle{\sigma(v_2)}};
(0,-2.5)*{\scriptstyle\bullet}="b2"; 
(-1,-1.5)*{\scriptstyle\bullet}="a2";
(2,-2.5)*{\scriptstyle\bullet}="e2";
(1,-2.5)*{\scriptstyle\bullet}="c2";
(6,-2.5)*{\scriptstyle\bullet}="e3"; 
(9,-1.5)*{\scriptstyle\bullet}="a3";
(8,-2.5)*{\scriptstyle\bullet}="b3";
(7,-2.5)*{\scriptstyle\bullet}="c3";
{\ar^{f}@{->}(3,-2.5)*{};(5,-2.5)*{}};
{\ar_{e_1}@{-}"a2"*{};"b2"*{}};
{\ar_{e_2}@{-}"b2"*{};"c2"*{}};
{\ar_{e_3}@{-}"c2"*{};"e2"*{}};
{\ar^{e_1'}@{-}"a3"*{};"b3"*{}};
{\ar^{e_3'}@{-}"b3"*{};"c3"*{}};
{\ar^{e_2'}@{-}"c3"*{};"e3"*{}};
{\ar^{}@{~}(1,-0.7)*{};(1,-1.5)*{}};
{\ar@{->}(1,-1.5)*{};(1,-1.7)*{}};
{\ar^{}@{~}(7,-0.7)*{};(7,-1.5)*{}};
{\ar@{->}(7,-1.5)*{};(7,-1.7)*{}};
\end{xy}
\]
\caption{The first case.}
\end{figure}

		 In the first case the path has at least $2$ edges. Let $e_2$ and $e_3$ be the edges of the path attached respectively to $v_2$ and $\sigma^{-1}(v_2')$, and let $e_2':=f(e_2)$ and $e_3':=f(e_3)$. Contracting all edges of $\Gamma$, except for $e_1$, $e_2$, $e_3$ and all edges of $\Gamma'$, except for $e_1'$, $e_2'$, $e_3'$, we get two stable $n$-legged chains $K$ and $K'$ with 4 vertices such that $f(C(K))=C(K')$. The edges $e_2$ of $K$ and $e_3'$ of $K'$ are attached to no leaf. However Proposition \ref{prop:chain} applied to $K$ and $K'$ implies that $f(e_2)=e_3'$, contradiction.\par

		In the second case the path has only one edge $e_2$. Let $e_2':=f(e_2)$. We claim that, up to switching $f$ with $f^{-1}$ and $\Gamma$ with $\Gamma'$, we can assume that $v_2'$ has no other edge attached to it other than $e_1'$ and $e_2'$. Note that this also implies that $\sigma^{-1}(v_2')$ is a leaf. To prove the claim, contract the set $S$ of all edges that belong to the connected component of $\Gamma\setminus\{e_1,e_2\}$ that contains $v_2$. Let $\overline{v_2}$ and $\overline{v_2}'$ be the vertices of $\Gamma/S$ and $\Gamma'/f(S)$, respectively, to which $v_2$ and $v_2'$ contracts. If at least one edge was contracted, i.e., $S$ is nonempty, then, by the induction hypothesis, there exists a permutation $\sigma'$ of $I_n$, such that $\sigma'(\Gamma/S)\simeq\Gamma'/f(S)$ and $f(e)=\sigma'(e)$ for every $e\in E(\Gamma/S)=E(\Gamma)\setminus S$. This implies that $\sigma'(e_1)=f(e_1)=e_1'$, hence $\sigma'(\overline{v_2})=\overline{v_2}'$, and since $\overline{v_2}$ has no edges attached to it other than $e_1$ and $e_2$, the only edges attached to $\overline{v_2}'$ are $e_1'$ and $e_2'$. However, no edge attached to $v_2'$ was contracted, hence we have proved that $E(v_2')=\{e_1',e_2'\}$.	If $S=\emptyset$, then we get that the only edges attached to $v_2$ are $e_1$ and $e_2$, which proves our claim after switching $\Gamma$ with $\Gamma'$ and $f$ with $f^{-1}$.\par
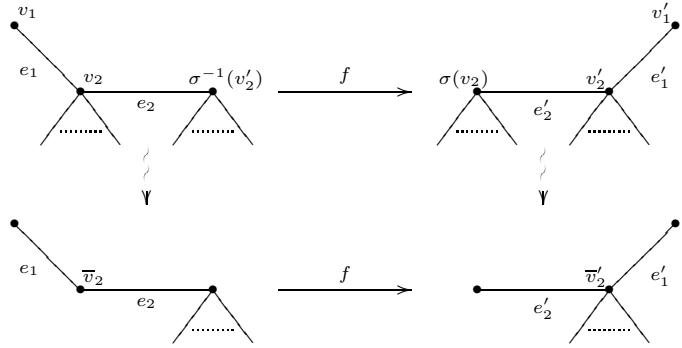
\begin{figure}[h]
\[
\begin{xy} <25pt,0pt>:
(0,0)*{\scriptstyle\bullet}="b"; 
(-1,1)*{\scriptstyle\bullet}="a";
(2,0)*{\scriptstyle\bullet}="c";
(6,0)*{\scriptstyle\bullet}="c1"; 
(9,1)*{\scriptstyle\bullet}="a1";
(8,0)*{\scriptstyle\bullet}="b1";
{\ar^{f}@{->}(3,0)*{};(5,0)*{}};
{\ar_{e_1}@{-}"a"*{};"b"*{}};
{\ar_{e_2}@{-}"b"*{};"c"*{}};
{\ar^{e_1'}@{-}"a1"*{};"b1"*{}};
{\ar^{e_2'}@{-}"b1"*{};"c1"*{}};
"a"+(0.2,0.2)*{\scriptstyle{v_1}};
"b"+(0.2,0.2)*{\scriptstyle{v_2}};
"c"+(0.2,0.2)*{\scriptstyle{\sigma^{-1}(v_2')}};
"a1"+(-0.2,0.2)*{\scriptstyle{v_1'}};
"b1"+(-0.2,0.2)*{\scriptstyle{v_2'}};
"c1"+(-0.2,0.2)*{\scriptstyle{\sigma(v_2)}};
(0,-3)*{\scriptstyle\bullet}="b2"; 
(-1,-2)*{\scriptstyle\bullet}="a2";
(2,-3)*{\scriptstyle\bullet}="c2";
(6,-3)*{\scriptstyle\bullet}="c3"; 
(9,-2)*{\scriptstyle\bullet}="a3";
(8,-3)*{\scriptstyle\bullet}="b3";
"b2"+(0.2,0.2)*{\scriptstyle{\overline{v}_2}};
"b3"+(-0.2,0.2)*{\scriptstyle{\overline{v}_2'}};
{\ar^{f}@{->}(3,-3)*{};(5,-3)*{}};
{\ar_{e_1}@{-}"a2"*{};"b2"*{}};
{\ar_{e_2}@{-}"b2"*{};"c2"*{}};
{\ar^{e_1'}@{-}"a3"*{};"b3"*{}};
{\ar^{e_2'}@{-}"b3"*{};"c3"*{}};
{\ar^{}@{~}(1,-0.7)*{};(1,-1.5)*{}};
{\ar@{->}(1,-1.5)*{};(1,-1.7)*{}};
{\ar^{}@{~}(7,-0.7)*{};(7,-1.5)*{}};
{\ar@{->}(7,-1.5)*{};(7,-1.7)*{}};
{\ar@{-}"b"*{};"b"+(-0.6,-0.8)*{}};
{\ar@{-}"b"*{};"b"+(0.6,-0.8)*{}};
{\ar@{..}"b"+(-0.3,-0.6)*{};"b"+(0.3,-0.6)*{}};
{\ar@{-}"c1"*{};"c1"+(-0.6,-0.8)*{}};
{\ar@{-}"c1"*{};"c1"+(0.6,-0.8)*{}};
{\ar@{..}"c1"+(-0.3,-0.6)*{};"c1"+(0.3,-0.6)*{}};
{\ar@{-}"c"*{};"c"+(-0.6,-0.8)*{}};
{\ar@{-}"c"*{};"c"+(0.6,-0.8)*{}};
{\ar@{..}"c"+(-0.3,-0.6)*{};"c"+(0.3,-0.6)*{}};
{\ar@{-}"b1"*{};"b1"+(-0.6,-0.8)*{}};
{\ar@{-}"b1"*{};"b1"+(0.6,-0.8)*{}};
{\ar@{..}"b1"+(-0.3,-0.6)*{};"b1"+(0.3,-0.6)*{}};
{\ar@{-}"c2"*{};"c2"+(-0.6,-0.8)*{}};
{\ar@{-}"c2"*{};"c2"+(0.6,-0.8)*{}};
{\ar@{..}"c2"+(-0.3,-0.6)*{};"c2"+(0.3,-0.6)*{}};
{\ar@{-}"b3"*{};"b3"+(-0.6,-0.8)*{}};
{\ar@{-}"b3"*{};"b3"+(0.6,-0.8)*{}};
{\ar@{..}"b3"+(-0.3,-0.6)*{};"b3"+(0.3,-0.6)*{}};
\end{xy}
\]
\caption{The second case: first contraction}
\end{figure}

		Let us come back to the proof of the second case. Contracting all edges of $\Gamma$ except $e_1$ and $e_2$ and all edges of $\Gamma'$ except $e_1'$ and $e_2'$,  we end up in the conditions of Proposition \ref{prop:3f}. Denote by $\overline{v}_1'$, $\overline{v}_2'$ and $\overline{\sigma^{-1}(v_2')}$ the vertices to which $v_1'$, $v_2'$ and $\sigma^{-1}(v_2')$ contract. Since $v_1'$, $v_2'$ and $\sigma^{-1}(v_2')$  have no edges attached to them that are contracted, we get that $L(\overline{v_1}')=L(v_1')$, $L(\overline{v_2}')=L(v_2')$ and $L(\overline{\sigma^{-1}(v_2')})=L(\sigma^{-1}(v_2'))$. Moreover, since $f(e_2)=e_2'$, we have that
\[
\l(\sigma^{-1}(v_2'))=n-\l(v_2')-\l(v_1').
\]

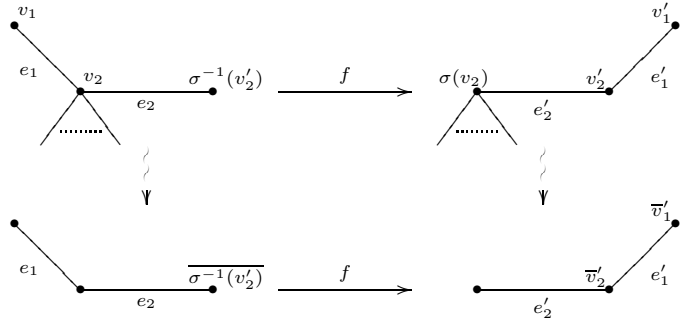
\begin{figure}[h]
\[
\begin{xy} <25pt,0pt>:
(0,0)*{\scriptstyle\bullet}="b"; 
(-1,1)*{\scriptstyle\bullet}="a";
(2,0)*{\scriptstyle\bullet}="c";
(6,0)*{\scriptstyle\bullet}="c1"; 
(9,1)*{\scriptstyle\bullet}="a1";
(8,0)*{\scriptstyle\bullet}="b1";
{\ar^{f}@{->}(3,0)*{};(5,0)*{}};
{\ar_{e_1}@{-}"a"*{};"b"*{}};
{\ar_{e_2}@{-}"b"*{};"c"*{}};
{\ar^{e_1'}@{-}"a1"*{};"b1"*{}};
{\ar^{e_2'}@{-}"b1"*{};"c1"*{}};
"a"+(0.2,0.2)*{\scriptstyle{v_1}};
"b"+(0.2,0.2)*{\scriptstyle{v_2}};
"c"+(0.2,0.2)*{\scriptstyle{\sigma^{-1}(v_2')}};
"a1"+(-0.2,0.2)*{\scriptstyle{v_1'}};
"b1"+(-0.2,0.2)*{\scriptstyle{v_2'}};
"c1"+(-0.2,0.2)*{\scriptstyle{\sigma(v_2)}};
(0,-3)*{\scriptstyle\bullet}="b2"; 
(-1,-2)*{\scriptstyle\bullet}="a2";
(2,-3)*{\scriptstyle\bullet}="c2";
(6,-3)*{\scriptstyle\bullet}="c3"; 
(9,-2)*{\scriptstyle\bullet}="a3";
(8,-3)*{\scriptstyle\bullet}="b3";
"a3"+(-0.2,0.2)*{\scriptstyle{\overline{v}_1'}};
"c2"+(0.2,0.2)*{\scriptstyle{\overline{\sigma^{-1}(v_2')}}};
"b3"+(-0.2,0.2)*{\scriptstyle{\overline{v}_2'}};
{\ar^{f}@{->}(3,-3)*{};(5,-3)*{}};
{\ar_{e_1}@{-}"a2"*{};"b2"*{}};
{\ar_{e_2}@{-}"b2"*{};"c2"*{}};
{\ar^{e_1'}@{-}"a3"*{};"b3"*{}};
{\ar^{e_2'}@{-}"b3"*{};"c3"*{}};
{\ar^{}@{~}(1,-0.7)*{};(1,-1.5)*{}};
{\ar@{->}(1,-1.5)*{};(1,-1.7)*{}};
{\ar^{}@{~}(7,-0.7)*{};(7,-1.5)*{}};
{\ar@{->}(7,-1.5)*{};(7,-1.7)*{}};
{\ar@{-}"b"*{};"b"+(-0.6,-0.8)*{}};
{\ar@{-}"b"*{};"b"+(0.6,-0.8)*{}};
{\ar@{..}"b"+(-0.3,-0.6)*{};"b"+(0.3,-0.6)*{}};
{\ar@{-}"c1"*{};"c1"+(-0.6,-0.8)*{}};
{\ar@{-}"c1"*{};"c1"+(0.6,-0.8)*{}};
{\ar@{..}"c1"+(-0.3,-0.6)*{};"c1"+(0.3,-0.6)*{}};
\end{xy}
\]
\caption{The second case: second contraction}
\end{figure}

Applying Proposition \ref{prop:count} to $\Gamma$ and $\Gamma'$, we must have that
\[
2^{\l(v_2)+\val(v_2)-1}+2^{\l(\sigma^{-1}(v_2'))}=2^{\l(\sigma(v_2))+\val(\sigma(v_2))-1}+2^{\l(v_2')+1}
\]
hence, since $\val(\sigma(v_2))=\val(v_2)-1$ and $\l(\sigma(v_2))=\l(v_2)$, we get $\l(v_2')+1=\l(v_2)+\val(v_2)-1$. However, we have that the number of legs in $\Gamma$ must be at least
\[
\l(v_1)+\l(v_2)+\l(\sigma^{-1}(v_2'))+2(\val(v_2)-2)
\]
and, using that $\l(\sigma^{-1}(v_2'))=\l(v_2')$, this implies that
\begin{align*}
n&\geq \l(v_1)+\l(v_2')+(\l(v_2)+\val(v_2)-2)+\val(v_2)-2\\
 &=\l(v_1)+2\l(v_2')+\val(v_2)-2\\
&=n+\val(v_2)-2.
\end{align*}
Hence we have $\val(v_2)=2$, which implies that $\Gamma$ has only 3 vertices, from which the result follows from Proposition \ref{prop:3f}.
\end{proof}

\begin{Rem}
\label{rem:spec}
  In this remark, for a $n$-legged tree $\Gamma$, we denote by $\widetilde{\Gamma}$ its underlying tree.\par
  When $f(C(\Gamma))=C(\Gamma')$, Proposition \ref{prop:sigma} and Remark \ref{rem:iso} shows that $f$ induces a unique isomorphism $g_\Gamma\col\widetilde{\Gamma}\to\widetilde{\Gamma'}$ unless $\Gamma$ has exactly 2 vertices with the same number of legs incident to them, in which case the two isomorphisms between $\widetilde{\Gamma}$ and $\widetilde{\Gamma'}$ can be induced by some permutation $\sigma$ that satisfies Proposition \ref{prop:sigma}. The unique isomorphisms induced by $f$ are compatible with specializations, namely, for every $S\subset E(\Gamma)$ the isomorphism $g_{\Gamma/S}\col\widetilde{\Gamma/S}\to\widetilde{\Gamma'/f(S)}$ is induced by $g_\Gamma$ via specialization. Moreover, in the case where $\Gamma$ has exactly $2$ vertices with the same number of legs incident to them, only one of the two possible isomorphisms is compatible with specializations, unless $n=4$. To prove such a claim just choose a $n$-legged tree with $3$ vertices that specializes to $\Gamma$. We will again abuse notation and denote $f(v)=g_\Gamma(v)$ for every $v\in V(\Gamma)$.
\end{Rem}

Let $A$ be a subset of $I_n$, with $2\leq |A|\leq n-2$. Define $\Gamma_A$ the $n$-legged tree with exactly $2$ vertices $v_A$ and $\overline{v}_A$ such that $L(v_A)=A$.
\begin{Prop}
\label{prop:cup}
If $B\subset A$, with $A, B\subset I_n$ with $2\leq |A|,|B|\leq n-2$, then $L(f(v_B))\subset L(f(v_A))$. 
\end{Prop}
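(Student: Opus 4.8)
The plan is to exploit the fact that contracting edges relates the $2$-vertex trees $\Gamma_A$, $\Gamma_B$, $\Gamma_{A\setminus B}$ (or whichever subsets are relevant) to a common $3$-vertex tree, and then to apply Proposition \ref{prop:3f} together with Remark \ref{rem:spec}. Concretely, since $B\subsetneq A$ and the sizes stay in the stable range, consider the $n$-legged chain $\Delta$ with three vertices $w_1,w_2,w_3$, where $L(w_1)=B$, $L(w_2)=A\setminus B$, and $L(w_3)=I_n\setminus A$, with edges $e_1=w_1w_2$ and $e_2=w_2w_3$ (one should first check this $\Delta$ is stable; if some middle set is too small one passes to a slightly different tree or treats the boundary cases $|B|=|A|-0$ etc.\ separately, but $B\subsetneq A$ forces $A\setminus B\neq\emptyset$ and the leaf vertices have $\geq 2$ legs each, while $w_2$ has $\geq 1$ leg and valence $2$, so it is stable). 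Then $\Delta/\{e_1\}=\Gamma_A$ (the two leaves carry $A$ and $I_n\setminus A$) and $\Delta/\{e_2\}=\Gamma_B$ (the two leaves carry $B$ and $I_n\setminus B$).

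Next I would use Proposition \ref{prop:sigma} (or rather Remark \ref{rem:spec}) to get a well-defined vertex map $f$ on $\Delta$ with $f(C(\Delta))=C(\Delta')$ for some $3$-vertex $\Delta'$, and such that $f$ is compatible with the edge bijection; write $v_1'=f(w_1)$, $v_2'=f(w_2)$, $v_3'=f(w_3)$ and $e_i'=f(e_i)$. By Remark \ref{rem:spec} the isomorphism $g_\Delta$ specializes to $g_{\Delta/\{e_1\}}$ and $g_{\Delta/\{e_2\}}$, and these are the isomorphisms realizing $f(C(\Gamma_A))=C(\Gamma'_{?})$ and $f(C(\Gamma_B))=C(\Gamma'_{?})$. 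The point is that contracting $e_1$ in $\Delta'$ merges $v_1'$ into $v_2'$, so the vertex of $\Delta'/\{e_1'\}$ receiving the legs $L(v_1')\cup L(v_2')$ is exactly $f(v_B\text{ after contraction})$; since $\Gamma_A=\Delta/\{e_1\}$ has its distinguished vertex $v_A$ carrying $A=B\sqcup(A\setminus B)$, which is the image of $w_1$ and $w_2$ merged, we conclude $L(f(v_A)) = L(v_1')\cup L(v_2')$. Similarly contracting $e_2$ gives $L(f(v_B))=L(v_1')$ (the legs of $w_1$ alone, since $v_B$ is the image of $w_1$). Hence $L(f(v_B))=L(v_1')\subset L(v_1')\cup L(v_2')=L(f(v_A))$, which is the claim.

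The care needed — and where I expect the main friction — is bookkeeping with the two-fold ambiguity flagged in Remark \ref{rem:spec}: when $\Gamma_A$ has its two vertices with equal numbers of legs ($|A|=n/2$), $f(v_A)$ a priori could be either vertex of $\Gamma'$, and likewise for $\Gamma_B$. This is resolved precisely by passing through the $3$-vertex tree $\Delta$: Remark \ref{rem:spec} asserts that in the ambiguous case only \emph{one} of the two isomorphisms is compatible with specializations (when $n\geq 5$), and it is this compatible one that we must use to define $L(f(v_A))$ and $L(f(v_B))$. So one should state at the outset that $f(v_A)$, $f(v_B)$ are defined via the vertex maps coming from $g_\Delta$ (equivalently, via any $3$-vertex tree specializing to them, consistently), and then the inclusion is forced. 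The case $n=4$ is degenerate (there are essentially no nested $A\supsetneq B$ with both in range, since $2\le|A|,|B|\le 2$ forces $A=B$), so the statement is vacuous there and no special argument is needed.

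One remaining detail: I should double-check that $f(e_1)$ and $f(e_2)$ are not swapped in a way that spoils the identification of which vertex carries which legs — but this is exactly the content of the "$f(e)=\sigma(e)$" clause in Proposition \ref{prop:sigma} applied to $\Delta$, which pins down $e_1'$ as the edge of $\Delta'$ whose contraction yields the isomorphism compatible with $\Gamma_A$. Thus the whole argument reduces to: build $\Delta$, invoke Proposition \ref{prop:sigma}/Remark \ref{rem:spec} once, and read off the two contractions.
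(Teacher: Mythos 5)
Your proof is correct and is essentially identical to the paper's: the authors also introduce the $3$-vertex chain with $L(w_1)=B$, $L(w_2)=A\setminus B$, apply Proposition \ref{prop:sigma} to it, and read off $L(f(v_B))$ and $L(f(v_A))$ from the two edge contractions via Remark \ref{rem:spec}. Your extra remarks on stability of the chain and on the two-vertex ambiguity of Remark \ref{rem:spec} are sound but not needed beyond what the paper already records.
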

\begin{proof}
If $B\neq A$, let $\Gamma$ be the $n$-legged tree with exactly $3$ vertices $w_1$, $w_2$ and $w_3$, with $w_1$ and $w_3$ being the leaves, such that $L(w_1)=B$ and $L(w_2)=A\setminus B$. By Proposition \ref{prop:sigma} we can write $L(f(w_1))=B'$ and $L(f(w_2))=C'$, with $|B'|=|B|$ and $|C'|=|A|-|B|$. Contracting the edge between $w_2$ and $w_3$ in $\Gamma$ we get $\Gamma_B$, hence, by Remark \ref{rem:spec}, we get $L(f(v_B))=B'$. Contracting the edge between $w_1$ and $w_2$ in $\Gamma$ we get $\Gamma_A$, hence, again by Remark \ref{rem:spec}, $L(f(v_A))=B'\cup C'$. Therefore we have $L(f(v_B))\subset L(f(v_A))$. 
\end{proof}

\begin{Cor}
\label{cor:cup}
Let $n\geq 5$ and $\Gamma_i$ be a stable $n$-legged trees with exactly 2 vertices $v_i$ and $\overline{v_i}$ for $i=1,2,3$, such that $\l(v_1)=\l(v_2)=2$, $|L(v_1)\cap L(v_2)|=1$ and $L(v_3)=L(v_1)\cup L(v_2)$. Then $|L(f(v_1))\cap L(f(v_2))|=1$ and $L(f(v_3))=L(f(v_1))\cup L(f(v_2))$.
\end{Cor}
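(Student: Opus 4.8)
The plan is to feed the inclusions coming from the hypothesis into Proposition \ref{prop:cup}, and then finish by counting legs and using that $f$ is injective on the set of cells. Since $n\ge 5$ and $\l(v_1)=\l(v_2)=2$, $\l(v_3)=3$, we have $2\le\l(v_i)\le n-2$ for $i=1,2,3$, so each $\Gamma_i$ is the tree $\Gamma_{L(v_i)}$ covered by Proposition \ref{prop:cup}, and by Remark \ref{rem:spec} each $f(v_i)$ is a well-defined vertex of $f(\Gamma_i)$; this last point needs a small check for $\Gamma_3$ when $n=6$, where its two vertices carry the same number of legs, but Remark \ref{rem:spec} still singles out the right one because $n\ge 5$. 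Applying Proposition \ref{prop:cup} to $L(v_1)\subset L(v_3)$ and to $L(v_2)\subset L(v_3)$ gives $L(f(v_1))\subset L(f(v_3))$ and $L(f(v_2))\subset L(f(v_3))$, hence $L(f(v_1))\cup L(f(v_2))\subset L(f(v_3))$. By Proposition \ref{prop:sigma}, $f$ preserves the number of legs at each vertex, so $|L(f(v_1))|=|L(f(v_2))|=2$ and $|L(f(v_3))|=3$.

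From $|L(f(v_1))\cup L(f(v_2))|\le|L(f(v_3))|=3$ and the inclusion-exclusion formula I get $|L(f(v_1))\cap L(f(v_2))|\ge 2+2-3=1$, while the intersection of two $2$-element sets has at most $2$ elements; so it has size $1$ or $2$. To exclude size $2$, suppose $L(f(v_1))=L(f(v_2))$. Then $f(\Gamma_1)$ and $f(\Gamma_2)$ are both the $2$-vertex $n$-legged tree determined by a vertex whose leg set is $L(f(v_1))$, so $f(C(\Gamma_1))=f(C(\Gamma_2))$; since $f$ permutes the cells of $M_{0,n}^{\text{trop}}$, this forces $C(\Gamma_1)=C(\Gamma_2)$, i.e.\ $\Gamma_1\simeq\Gamma_2$ as $n$-legged trees. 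But $|L(v_1)\cap L(v_2)|=1$ together with $|L(v_1)|=|L(v_2)|=2$ gives $L(v_1)\ne L(v_2)$, and $|L(v_1)|=2\ne n-2$ rules out $L(v_1)=I_n\setminus L(v_2)$; hence the $2$-vertex trees $\Gamma_1$ and $\Gamma_2$ are not isomorphic as $n$-legged trees, a contradiction.

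Therefore $|L(f(v_1))\cap L(f(v_2))|=1$, which forces $|L(f(v_1))\cup L(f(v_2))|=3$; combined with $L(f(v_1))\cup L(f(v_2))\subset L(f(v_3))$ and $|L(f(v_3))|=3$, this yields $L(f(v_3))=L(f(v_1))\cup L(f(v_2))$, completing the proof. The only delicate point is the bookkeeping that makes $f(v_i)$ well defined (Remark \ref{rem:spec}) and that identifies $f(\Gamma_i)$ with the $2$-vertex tree carrying leg set $L(f(v_i))$ in the small cases $n=5,6$; once that is settled, the argument is a two-line count.
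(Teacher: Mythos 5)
Your proof is correct and follows essentially the same route as the paper's: Proposition \ref{prop:cup} gives the two inclusions into $L(f(v_3))$, injectivity of $f$ on cells rules out $L(f(v_1))=L(f(v_2))$, and the leg count finishes the argument. The extra care you take with the well-definedness of $f(v_3)$ via Remark \ref{rem:spec} and with the small cases is sound but not a different method.
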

\begin{proof}
 By Proposition \ref{prop:cup}, we have $L(f(v_1))\subset L(f(v_3))$ and $L(f(v_2))\subset L(f(v_3))$. Since $L(f(v_1))\neq L(f(v_2))$ (otherwise $f(C(\Gamma_1))=f(C(\Gamma_2))$) and $\l(f(v_i))=\l(v_i)$ for $i=1,2,3$  (by Corollary \ref{cor:2f}), the result follows.
\end{proof}

\begin{Prop}
\label{prop:sigma2}
There exists a permutation $\sigma$ of $I_n$ such that, for every stable $n$-legged tree $\Gamma$ with exactly $2$ vertices, we have $L(f(v))=\sigma(L(v))$ for all $v\in V(\Gamma)$ with $|L(v)|=2$.
\end{Prop}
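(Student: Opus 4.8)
## Proof proposal for Proposition \ref{prop:sigma2}

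The plan is to produce $\sigma$ directly from the action of $f$ on the $2$-element subsets of $I_n$, and then check it is well-defined and a bijection using the combinatorial constraints already established. For each $2$-element subset $A=\{i,j\}\subset I_n$ (note $2\leq 2\leq n-2$ since we may assume $n\geq 4$), Corollary \ref{cor:2f} gives $\l(f(v_A))=2$, so $L(f(v_A))$ is again a $2$-element subset. Define $\phi\col\binom{I_n}{2}\to\binom{I_n}{2}$ by $\phi(A)=L(f(v_A))$. This is injective: if $L(f(v_A))=L(f(v_B))$ then $f(C(\Gamma_A))$ and $f(C(\Gamma_B))$ are the same cell, forcing $A=B$ since $f$ is a bijection on cells; hence $\phi$ is a bijection of $\binom{I_n}{2}$.

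Next I would show $\phi$ is induced by a permutation of $I_n$. The key input is Corollary \ref{cor:cup}: whenever $A,B$ are distinct $2$-subsets with $|A\cap B|=1$, the sets $\phi(A)$ and $\phi(B)$ also satisfy $|\phi(A)\cap\phi(B)|=1$ (this uses $n\geq5$, so that $A\cup B$ has $3\leq n-2$ elements and $v_{A\cup B}$ makes sense, matching the hypothesis of the corollary). In other words, $\phi$ preserves the "adjacency" relation on $2$-subsets: think of $\binom{I_n}{2}$ as the edge set of the complete graph $K_n$, where two edges are adjacent iff they share exactly one vertex. Then $\phi$ is an automorphism of the line graph $L(K_n)$. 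By the classical theorem of Whitney, for $n\geq5$ every automorphism of $L(K_n)$ is induced by a (unique) automorphism of $K_n$, i.e.\ by a permutation $\sigma$ of $I_n$; thus $\phi(\{i,j\})=\{\sigma(i),\sigma(j)\}$ for all $i,j$. (If one prefers to avoid quoting Whitney: for a fixed $i$, the $n-1$ edges through $i$ form a maximal clique in the adjacency relation among $2$-subsets when $n\geq5$; $\phi$ sends it to such a clique, which is again a star at some vertex $\sigma(i)$; checking $\sigma$ is well-defined and bijective on vertices is then a short direct argument using that distinct stars meet in exactly one edge.)

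Finally, unwinding the definitions: for any stable $n$-legged tree $\Gamma$ with exactly $2$ vertices $v,\overline v$ and $|L(v)|=2$, we have $\Gamma\simeq\Gamma_{L(v)}$, and by Remark \ref{rem:spec} the identification of $f(C(\Gamma))$ with a $2$-vertex tree sends $v$ to the vertex with leg set $L(f(v))=\phi(L(v))=\sigma(L(v))$. This is exactly the asserted equality $L(f(v))=\sigma(L(v))$.

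I expect the main obstacle to be the middle step: rigorously passing from "$\phi$ preserves the shares-one-leg relation on $2$-subsets" to "$\phi=\sigma_*$ for a permutation $\sigma$," and in particular handling the well-definedness of $\sigma$ on vertices (one must rule out that the star at $i$ maps to a triangle $\{\{a,b\},\{b,c\},\{a,c\}\}$ — possible in $L(K_n)$ only when the star has exactly $3$ edges, i.e.\ $n=4$, which is why the hypothesis $n\geq5$ is essential here) and that the resulting map on $I_n$ is a bijection. The low-edge-count exceptional behavior at $n=4$ is precisely what forces the separate treatment of $M_{0,4}^{\mathrm{trop}}$ promised in the introduction, so I would flag that the argument of this proposition genuinely requires $n\geq5$.
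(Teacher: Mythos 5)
Your argument is correct, but it takes a genuinely different route from the paper's. Both proofs rest on the same inputs, namely Proposition \ref{prop:cup} and Corollary \ref{cor:cup}; the difference is in how the permutation is extracted. The paper builds $\sigma$ by hand: it first pins down $i_1,i_2,i_3$ from the images of $v_{\{1,2\}}$, $v_{\{1,3\}}$, $v_{\{2,3\}}$ (using the containment $L(f(v_{\{2,3\}}))\subset L(f(v_{\{1,2,3\}}))$), then iterates, at each step ruling out the ``triangle'' configuration $L(f(v_{\{1,4\}}))=\{i_2,i_3\}$ by injectivity on cells. You instead observe that $A\mapsto L(f(v_A))$ is an automorphism of the line graph $L(K_n)$ and invoke Whitney's theorem that $\Aut(L(K_n))\cong S_n$ for $n\geq 5$; your fallback clique argument (stars of size $n-1\geq 4$ must map to stars) is essentially the paper's iteration in conceptual form. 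Your packaging is shorter and makes completely transparent where $n\geq 5$ is needed (the exceptional automorphisms of $L(K_4)$, the octahedron), at the cost of quoting an external theorem; the paper's version is self-contained. Two small points you should make explicit: (i) Corollary \ref{cor:cup} only gives that $\phi$ preserves adjacency, so to conclude $\phi\in\Aut(L(K_n))$ you should either note that an adjacency-preserving bijection of a finite graph to itself automatically preserves non-adjacency (a counting argument), or apply Corollary \ref{cor:cup} to $f^{-1}$ as well; (ii) in the injectivity step, $f(C(\Gamma_A))=f(C(\Gamma_B))$ forces $\Gamma_A\simeq\Gamma_B$, which gives $A=B$ \emph{or} $A=I_n\setminus B$, and the latter is excluded only because $|A|=|B|=2$ and $n\geq 5$. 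Both are routine and do not affect the validity of the argument.
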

\begin{proof}
 By Corollary \ref{cor:cup}, there exist distinct elements $i_1$, $i_2$ and $i_3$ of $I_n$ such that 
\[
L(f(v_{\{1,2\}})=\{i_1,i_2\},\;L(f(v_{\{1,3\}}))=\{i_1,i_3\},\; L(f(v_{\{1,2,3\}}))=\{i_1,i_2,i_3\}.
\] Then, by Proposition \ref{prop:cup}, we have that $L(f(v_{\{2,3\}}))\subset \{i_1,i_2,i_3\}$. In this way, since 
\[
L(f(v_{\{2,3\}}))\neq L(f(v_{\{1,2\}}))\quad\text{and}\quad L(f(v_{\{2,3\}}))\neq L(f(v_{\{1,3\}})),
\]
 we get $L(f(v_{\{2,3\}}))=\{i_2,i_3\}$. By Corollary \ref{cor:cup} we have 
\[
L(f(v_{\{1,4\}}))\cap L(f(v_{\{1,2\}}))\neq\emptyset\quad\text{and}\quad L(f(v_{\{1,4\}}))\cap L(f(v_{\{1,3\}}))\neq\emptyset.
\]
Thus, we get that either $i_1\in L(f(v_{\{1,4\}}))$ or $\{i_2,i_3\}=L(f(v_{\{1,4\}}))$. The latter case can not happen because $L(f(v_{\{1,4\}}))\neq L(f(v_{\{2,3\}}))$. Hence, there exists $i_4\in I_n\setminus\{i_1,i_2,i_3\}$ such that $L(f(v_{\{1,4\}}))=\{i_1,i_4\}$. Analogously, $i_2\in L(f(v_{\{2,4\}}))$, and since $L(f(v_{\{1,4\}}))\cap L(f(v_{\{2,4\}}))\neq\emptyset$, we get that $L(f(v_{\{2,4\}}))=\{i_2,i_4\}$. Iterating the argument, we find indices $i_j\in I_n$ such that $L(f(v_{\{j,k\}}))=\{i_j,i_k\}$ and we can define $\sigma(j):=i_j$.
\end{proof}
\begin{Cor}
\label{cor:sigma3}
There exists a permutation $\sigma$ of $I_n$ such that, for every stable $n$-legged tree $\Gamma$ with exactly $2$ vertices, we have $L(f(v))=\sigma(L(v))$ for all $v\in V(\Gamma)$.
\end{Cor}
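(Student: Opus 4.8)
The plan is to take the permutation $\sigma$ produced by Proposition \ref{prop:sigma2}, which already controls $L(f(v))$ for every vertex $v$ with exactly two legs, and to show that this same $\sigma$ also computes $L(f(w))$ for a vertex $w$ of a two-vertex tree with $\l(w)\geq 3$. The key observation is that such a tree $\Gamma_A$ (with $|A|=|L(w)|$ between $2$ and $n-2$) has its two vertices $v_A$ and $\overline v_A$ with $L(v_A)=A$ and $L(\overline v_A)=I_n\setminus A$, so it suffices to determine $L(f(v_A))$ for all $A$ with $2\leq|A|\leq n-2$ — the complementary vertex is then forced since $L(f(v_A))$ and $L(f(\overline v_A))$ partition $I_n$.

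First I would dispose of the case $n=4$ separately, or note that Proposition \ref{prop:sigma2} already covers it since every stable $4$-legged tree with two vertices has both vertices carrying exactly two legs. For $n\geq 5$, fix $A$ with $3\leq |A|\leq n-2$ and pick any $a\in A$ and any $b\in A$, $b\neq a$; then $\{a,b\}\subset A$, and by Proposition \ref{prop:cup} we get $L(f(v_{\{a,b\}}))\subset L(f(v_A))$, i.e.\ $\{\sigma(a),\sigma(b)\}\subset L(f(v_A))$ using Proposition \ref{prop:sigma2}. Letting $a$ be fixed and $b$ range over $A\setminus\{a\}$, this yields $\sigma(A)\subset L(f(v_A))$. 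Since $f$ preserves the number of legs at each vertex by Corollary \ref{cor:2f}, we have $|L(f(v_A))|=|A|=|\sigma(A)|$, hence $L(f(v_A))=\sigma(A)$.

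It then follows that $L(f(\overline v_A))=I_n\setminus\sigma(A)=\sigma(I_n\setminus A)=\sigma(L(\overline v_A))$, so $L(f(v))=\sigma(L(v))$ for both vertices of $\Gamma_A$, and ranging over all admissible $A$ gives the statement for every stable $n$-legged tree with exactly two vertices. I do not expect a serious obstacle here: the content is entirely in Proposition \ref{prop:sigma2}, and this corollary is the routine "propagation" from two-leg vertices to arbitrary vertices via the containment relation of Proposition \ref{prop:cup} together with the cardinality bookkeeping of Corollary \ref{cor:2f}. The only mild subtlety is making sure the index range $2\leq|A|\leq n-2$ is respected throughout (so that $\Gamma_A$ is genuinely a stable $n$-legged tree), which is automatic here since $|A|\geq 2$ and $|I_n\setminus A|\geq 2$.
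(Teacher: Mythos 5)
Your proposal is correct and follows essentially the same route as the paper: take $\sigma$ from Proposition \ref{prop:sigma2}, use Proposition \ref{prop:cup} on the two-element subsets $B\subset L(v)$ to get $\sigma(L(v))\subset L(f(v))$, and conclude equality by the cardinality count $\l(f(v))=\l(v)$. The paper applies this argument directly to each vertex rather than passing to the complement, and leaves the cardinality step and the $n=4$ remark implicit, but these are only presentational differences.
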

\begin{proof}
Let $B\subset L(v)$ such that $|B|=2$. Then, by Proposition \ref{prop:sigma2}, there exists a permutation $\sigma$ of $I_n$, which does not depend on $B$, such that $\sigma(B)=L(f(v_B))$. By Proposition \ref{prop:cup}, we have that $\sigma(B)\subset L(f(v))$. Since this holds for all $B$ with $|B|=2$, then $L(f(v))=\sigma(L(v))$.
\end{proof}

\begin{Thm}
\label{thm:main}
For $n\geq5$ the automorphism groups of $M_{0,n}^{\textnormal{trop}}$ and $\overline{M}_{0,n}^{\textnormal{trop}}$ are isomorphic to $S_n$. The automorphism groups of $M_{0,4}^{\textnormal{trop}}$ and $\overline{M}_{0,4}^{\textnormal{trop}}$ are isomorphic to $S_3$.
\end{Thm}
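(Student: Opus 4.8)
The plan is to reduce to $M_{0,n}^{\textnormal{trop}}$ by Proposition~\ref{prop:barra}, and then study the natural group homomorphism $\rho\colon S_n\to\Aut(M_{0,n}^{\textnormal{trop}})$ sending a permutation $\sigma$ to the automorphism that relabels legs; note that $\rho(\sigma)$ maps $C(\Gamma)$ to $C(\sigma(\Gamma))$ and is the identity on each cone, since $\sigma$ does not move edges. First I would compute $\ker\rho$: for $A\subset I_n$ with $2\le|A|\le n-2$ the cone $C(\Gamma_A)$ must be fixed by $\rho(\sigma)$, which forces $\sigma(A)\in\{A,I_n\setminus A\}$; letting $A$ range over two–element subsets gives $\ker\rho=\{e\}$ when $n\ge5$, hence $\mathrm{im}(\rho)\cong S_n$, while $\ker\rho=\{e,(12)(34),(13)(24),(14)(23)\}$ when $n=4$, hence $\mathrm{im}(\rho)\cong S_3$.

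The case $n=4$ is then immediate: a stable $4$–legged tree has at most two vertices, so $M_{0,4}^{\textnormal{trop}}$ is the single $0$–dimensional cone together with exactly three rays ($\Gamma_{\{1,2\}},\Gamma_{\{1,3\}},\Gamma_{\{1,4\}}$). An automorphism fixes the vertex of the complex and permutes the three rays, and is determined by this permutation because on each ray it is a primitive integral isomorphism of $\mathbb{R}_{\ge0}$, hence the identity of $\mathbb{R}_{\ge0}$. Thus $\Aut(M_{0,4}^{\textnormal{trop}})\hookrightarrow S_3$; since it already contains $\mathrm{im}(\rho)\cong S_3$, it equals $S_3$, and Proposition~\ref{prop:barra} gives the same for $\overline{M}_{0,4}^{\textnormal{trop}}$.

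For $n\ge5$ it remains to prove $\rho$ is surjective. Given $f\in\Aut(M_{0,n}^{\textnormal{trop}})$, Corollary~\ref{cor:sigma3} produces $\sigma\in S_n$ with $L(f(v))=\sigma(L(v))$ for every vertex of every stable $n$–legged tree with two vertices; put $h:=\rho(\sigma)^{-1}\circ f$. Then $h$ fixes every two–vertex cone $C(\Gamma_A)$, and being a primitive isomorphism of a ray it is the identity there, so $h$ also fixes the unique edge of $\Gamma_A$. I would now show $h$ is the identity on each cone. Fix a stable $n$–legged tree $\Gamma$ with at least one edge and write $h(C(\Gamma))=C(\Gamma')$. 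By Proposition~\ref{prop:sigma} there is $\tau\in S_n$ with $\tau(\Gamma)\simeq\Gamma'$ and $h(e)=\tau(e)$ for all $e\in E(\Gamma)$, so $h$ induces a bijection $E(\Gamma)\to E(\Gamma')$. For each $e\in E(\Gamma)$, contracting $E(\Gamma)\setminus\{e\}$ yields the two–vertex tree whose split is the split $S_e$ of $e$; applying $h$, using $h(C(\Gamma/S))=C(\Gamma'/h(S))$, and using that $h$ fixes two–vertex cones and their edges, we find that the edge $h(e)$ of $\Gamma'$ also has split $S_e$. Hence $\Gamma$ and $\Gamma'$ induce the same set of splits, with $e\mapsto h(e)$ respecting splits. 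Since a stable $n$–legged tree is determined up to isomorphism by the set of splits of its edges and distinct edges induce distinct splits (a short check from stability: every leaf, and every valence–two vertex, carries a leg), we get $\Gamma'\simeq\Gamma$, so $h(C(\Gamma))=C(\Gamma)$, and $h(e)=e$ for all $e$, so $h|_{C(\Gamma)}=\mathrm{id}$. As the cones cover $M_{0,n}^{\textnormal{trop}}$, we conclude $h=\mathrm{id}$ and $f=\rho(\sigma)$. Together with injectivity this yields $\Aut(M_{0,n}^{\textnormal{trop}})\cong S_n$, and Proposition~\ref{prop:barra} transports this to $\overline{M}_{0,n}^{\textnormal{trop}}$.

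The main obstacle is this last step: passing from the local fact that $f$ agrees with $\sigma$ on all two–vertex trees to the global equality $f=\rho(\sigma)$. The key input is that a stable $n$–legged tree, together with its edge set, is reconstructed from the set of splits induced by its edges (the splits–equivalence theorem); this is what allows the two–vertex data to pin down $h$ on every cone, once Proposition~\ref{prop:sigma} has supplied the edge bijection $E(\Gamma)\to E(\Gamma')$ realized by a permutation.
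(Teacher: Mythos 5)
Your proposal is correct, and its skeleton coincides with the paper's: reduction to $M_{0,n}^{\textnormal{trop}}$ via Proposition \ref{prop:barra}, injectivity of $\rho$ by testing on the cones $C(\Gamma_A)$ with $|A|=2$, the explicit three-ray description for $n=4$, and Corollary \ref{cor:sigma3} as the anchor for surjectivity. Where you genuinely diverge is the globalization step, i.e., passing from agreement with $\rho(\sigma)$ on two-vertex cones to agreement everywhere. The paper proves $L(f(v))=\sigma(L(v))$ for all vertices of all stable trees by induction on the number of edges: contract a leaf edge, apply the inductive hypothesis to the contraction, and recover the leg set of the vertex adjacent to the leaf from the identity $L(w_1)=L(v)\cup L(v_1)$. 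You instead normalize to $h=\rho(\sigma)^{-1}\circ f$, note that $h$ fixes each ray $C(\Gamma_A)$ pointwise, read off the split of each edge of $\Gamma$ from the corresponding two-vertex contraction to conclude that $h$ preserves every split, and then invoke the splits-equivalence theorem (plus the observation that stability forces distinct edges to carry distinct splits) to get $\Gamma'\simeq\Gamma$ and $h(e)=e$. Your route imports a classical external result where the paper stays self-contained with an elementary induction, but it isolates cleanly the combinatorial datum (the set of splits) that determines a cone and avoids the bookkeeping of the paper's inductive step; the two arguments are of comparable depth, since the uniqueness part of splits-equivalence is itself usually proved by exactly such an induction. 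Two minor remarks: your citation of Proposition \ref{prop:sigma} in this step is not actually needed, since the edge bijection $h\colon E(\Gamma)\to E(\Gamma')$ and the compatibility $h(C(\Gamma/S))=C(\Gamma'/h(S))$ are already supplied by the preliminaries; and if you keep the splits-equivalence theorem as the key input, you should either give the short reconstruction argument or supply a precise reference, as it is the one ingredient not established in the paper.
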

\begin{proof}
Note that by Proposition \ref{prop:barra} it is enough to prove the theorem for $M_{0,n}^{\textnormal{trop}}$.\par
First of all, we show that, for $n\geq 5$, there is an injective group homomorphism from $S_n$ to the automorphism group of $M_{0,n}^{\text{trop}}$. Clearly, given a permutation $\sigma\in S_n$, there is an automorphism of $M_{0,n}^{\text{trop}}$ defined by the map sending $\Gamma$ to $\sigma(\Gamma)$ and preserving the lenghts of the edges. If $\sigma$ induces the identity in $M_{0,n}^{\text{trop}}$, then $\sigma(\Gamma_A)=\Gamma_A$ for every $A\subset I_n$ with $2\leq |A|\leq n-2$ (recall the definition of $\Gamma_A$ before Proposition \ref{prop:cup}). This implies that $\sigma(A)=A$ for every $A\subset I_n$ with $|A|\neq n/2$. In particular, since $n\geq 5$, this holds for every $A\subset I_n$ such that $|A|=2$. This clearly implies that $\sigma$ is the indentity.\par

Now we prove that every automorphism $f$ of $M_{0,n}^{\text{trop}}$ is induced by some permutation $\sigma\in S_n$. Let $\sigma$ be the permutation of $I_n$ as in the statement of Corollary \ref{cor:sigma3}. For $n\geq5$ all that is left to do is to prove that $\sigma$ satisfies $L(f(v))=\sigma(L(v))$ for all $\Gamma$ stable $n$-legged tree and $v\in V(\Gamma)$.\par
   We will prove the result by induction on the number of edges.  Let $\Gamma_1$ and $\Gamma_2$ be stable $n$-legged trees, such that $f(C(\Gamma_1))=C(\Gamma_2)$. Let $v$ be a leaf of $\Gamma_1$ and $e$ be the only edge attached to it. Contracting all edges of $\Gamma_1$ except $e$, and all edges of $\Gamma_2$ except $f(e)$, we get two stable $n$-legged trees $\Gamma_1'$ and $\Gamma_2'$  with exactly $2$ vertices. Let $v'$ be the vertex in $\Gamma_1'$ to which $v$ contracts. By  Remark \ref{rem:spec}, Corollary \ref{cor:sigma3} and the fact that $L(v)=L(v')$, we get that 
\begin{equation}
\label{eq:tau}
L(f(v)))=L(f(v'))=\sigma(L(v'))=\sigma(L(v)).
\end{equation}\par

	Contracting $e$ and using the induction hypothesis, we get that 
\begin{equation}
\label{eq:w}
L(f(w))=\sigma(L(w))\quad\text{for}\quad w\in V(\Gamma_1/\{e\}).
\end{equation}
 Let $v_1$ be the vertex in $\Gamma_1$ connected to $v$ and $w_1$ be the vertex in $\Gamma_1/\{e\}$ to which $v$ and $v_1$ contract. We have $L(w_1)=L(v)\cup L(v_1)$ and therefore, by Remark \ref{rem:spec}, $L(f(w_1))=L(f(v))\cup L(f(v_1))$. Combining Equations \eqref{eq:tau} and \eqref{eq:w}, we get $\sigma(L(w_1))=\sigma(L(v))\cup L(f(v_1))$, and since $\sigma(L(w_1))=\sigma(L(v))\cup\sigma(L(v_1))$, we have that $L(f(v_1))=\sigma(L(v_1))$. Finally, if $v_2\in V(\Gamma)\setminus\{v,v_1\}$, then there exists $w\in V(\Gamma/\{e\})$ such that $L(v_2)=L(w)$, and the result follows by Equation \eqref{eq:w}.\par
  For $n=4$, we have that $M_{0,4}^{trop}$ has just $3$ maximal cones which have dimension $1$ and they intersect along the $0$-dimensional cone. Therefore it is clear that its group of automorphisms is $S_3$. Note that the map $S_4\to S_3$, defined at the beginning of the proof, has kernel equal to the normal Klein subgroup $\{ (), (12)(34), (13)(24),(14)(23)\}$.
\end{proof}

\section{Final remarks and further questions}

	   We begin with an example in higher genus. The notion of automorphism of $M_{0,n}^{\text{trop}}$ naturally extends to $M_{g,n}^{\text{trop}}$ and $\overline{M}_{g,n}^{\textnormal{trop}}$ for $g>0$. Also Proposition \ref{prop:barra} should hold for higher genus. However,  a series of complications arise. For instance, Proposition \ref{prop:aut} does not hold for general graphs. Hence the typical cell of $M_{g,n}^{\text{trop}}$ is of type $C(\Gamma)=\mathbb{R}_{>0}^{|E(\Gamma)}/\Aut(\Gamma)$. Therefore, if $f$ is an automorphism of $M_{g,n}^{\text{trop}}$ such that $f(C(\Gamma))=C(\Gamma')$, then $f|_{C(\Gamma)}$ is induced by a bijection $f\col E(\Gamma)\to E(\Gamma')$ that is compatible with $\Aut(\Gamma)$ and $\Aut(\Gamma')$. Moreover, we do not see how to extend Proposition \ref{prop:count} for higher genus.\par
		
				\begin{figure}[h]
\[
\begin{xy} <30pt,0pt>:
(0,0)*{\scriptstyle\bullet}="a"; 
(1,0)*{\scriptstyle\bullet}="b";
(2.5,0)*{\scriptstyle\bullet}="c";
(3.5,0)*{\scriptstyle\bullet}="d";
(2.5,-2)*{\scriptstyle\bullet}="e";
(3.5,-2)*{\scriptstyle\bullet}="f";
(0.5,-2)*{\scriptstyle\bullet}="g";
(0.75,-4)*{\scriptstyle\bullet}="m";
(2.5,-4)*{\scriptstyle\bullet}="n";
(3.5,-4)*{\scriptstyle\bullet}="o";
(0.5,-6)*{\scriptstyle\bullet}="p";
"a"+0;"b"+0**\crv{"a"+(0.5,0.7)};
"a"+0;"b"+0**\crv{"a"+(0.5,-0.7)};
"a"+0;"b"+0**\crv{"a"+(0.5,0)};
"c"+0;"d"+0**\crv{"c"+(0.5,0)}; 
"c"+(-0.25,0)*\xycircle(0.25,0.25){-};
"d"+(0.25,0)*\xycircle(0.25,0.25){-};
"e"+0;"f"+0**\crv{"e"+(0.5,0)}; 
"e"+(-0.25,0)*\xycircle(0.25,0.25){-};
"g"+(-0.25,0)*\xycircle(0.25,0.25){-};
"g"+(0.25,0)*\xycircle(0.25,0.25){-};
"m"+(-0.25,0)*\xycircle(0.25,0.25){-};
"n"+0;"o"+0**\crv{"n"+(0.5,0)}; 
"a"+(0,0.44)*{{\Gamma_1}};
"c"+(0.5,0.44)*{{\Gamma_2}};
"e"+(1.4,0.2)*{{\Gamma_4}};
"g"+(-0.7,0.44)*{{\Gamma_3}};
"m"+(-0.75,0)*{{\Gamma_5}};
"n"+(1.4,0.2)*{{\Gamma_6}};
"a"+(-0.13,0)*{\scriptstyle{0}};
"b"+(0.13,0)*{\scriptstyle{0}};
"c"+(-0.13,0)*{\scriptstyle{0}};
"d"+(0.13,0)*{\scriptstyle{0}};
"e"+(-0.13,0)*{\scriptstyle{0}};
"f"+(0.1,0)*{\scriptstyle{1}};
"g"+(-0.13,0)*{\scriptstyle{0}};
"m"+(-0.1,0)*{\scriptstyle{1}};
"n"+(-0.1,0)*{\scriptstyle{1}};
"o"+(0.1,0)*{\scriptstyle{1}};
"p"+(-0.13,0)*{\scriptstyle{2}};
{\ar^{}@{~}(0.5,-0.7)*{};(0.5,-1.4)*{}};
{\ar@{->}(0.5,-1.4)*{};(0.5,-1.6)*{}};
{\ar^{}@{~}(2.2,-0.4)*{};(1.2,-1.4)*{}};
{\ar@{->}(1.2,-1.4)*{};(1,-1.6)*{}};
{\ar^{}@{~}(3,-0.3)*{};(3,-1.4)*{}};
{\ar@{->}(3,-1.4)*{};(3,-1.6)*{}};
{\ar^{}@{~}(0.5,-2.5)*{};(0.5,-3.4)*{}};
{\ar@{->}(0.5,-3.4)*{};(0.5,-3.6)*{}};
{\ar^{}@{~}(3,-2.4)*{};(3,-3.4)*{}};
{\ar@{->}(3,-3.4)*{};(3,-3.6)*{}};
{\ar^{}@{~}(2.2,-2.4)*{};(1.2,-3.4)*{}};
{\ar@{->}(1.2,-3.4)*{};(1,-3.6)*{}};
{\ar^{}@{~}(0.5,-4.4)*{};(0.5,-5.4)*{}};
{\ar@{->}(0.5,-5.4)*{};(0.5,-5.6)*{}};
{\ar^{}@{~}(2.2,-4.4)*{};(1.2,-5.4)*{}};
{\ar@{->}(1.2,-5.4)*{};(1,-5.6)*{}};
\end{xy}
\]
\caption{Genus-$2$ tropical curves.}
\label{fig:g2}
\end{figure}
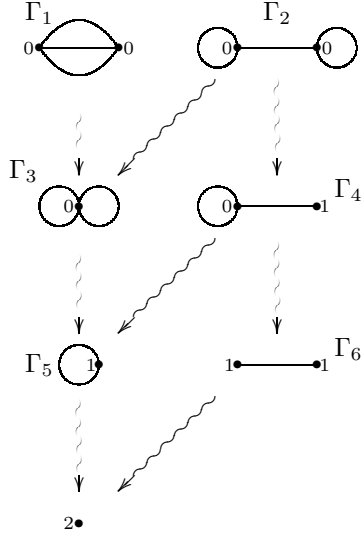

We now prove that the automorphism group of $M_2^{\text{trop}}$ is trivial.	Let $f$ be an automorphism of $M_2^{\text{trop}}$. Since $\Gamma_1$ only specializes to $\Gamma_3$, and $\Gamma_2$ specializes to $\Gamma_3$ and $\Gamma_4$, we see that $f(C(\Gamma_1))=C(\Gamma_1)$ and $f(C(\Gamma_2))=C(\Gamma_2)$ (the argument is essentially the one in Corollary \ref{cor:2f}). Analogously, it is easy to see that $f(C(\Gamma_i))=C(\Gamma_i)$ for $i=1,\ldots,6$. Since $\Aut(\Gamma_1)=S_3$, we see that $f|_{C(\Gamma_1)}$ is the identity, and hence $f|_{C(\Gamma_3)}$ and $f|_{C(\Gamma_5)}$ are the identity as well. If $f|_{C(\Gamma_2)}$ is the identity we are done. Otherwise, since there exists an automorphism of $\Gamma_2$ swapping the two loops, we have that $f$ must swap the unique edge of $\Gamma_2$ which is not a loop, with a loop of $\Gamma_2$. This would imply that $f(C(\Gamma_3))=C(\Gamma_4)$, a contradiction.\par
\medskip
   We conclude the paper by noting that, by \cite[Proposition 6.1.8]{ACP}, we have that a toroidal automorphism of a toroidal scheme $X$ induces an automorphism of its skeleton $\overline{\Sigma}(X)$, and that we get a group homomorphism
	\[
	\alpha_X\col\Aut^{\text{tor}}(X)\to \Aut(\overline{\Sigma}(X)),
	\]
	where $\Aut^{\text{tor}}(X)$ denotes the group of toroidal automorphisms of $X$. Hence, by \cite[Theorem 1.2.1]{ACP}, we have group homomorphisms
	\[
	\Aut(\overline{M}_{0,n})\hookleftarrow\Aut^{\text{tor}}(\overline{M}_{0,n})\to\Aut(\overline{M}_{0,n}^{\textnormal{trop}})
	\]
	which are, a posteriori, isomorphisms for $n\geq 5$ due to the result \cite{BM} of Bruno and Mella and Theorem \ref{thm:main}. Nevertheless it would be interesting to address the following questions
\begin{enumerate}
\item when the homomorphism	$\alpha_X$ is an isomorphism or at least injective?
\item	when the natural inclusion $\Aut^{\text{tor}}(X)<\Aut(X)$ is an isomorphism?
\end{enumerate}
We note that $\Aut^{\text{tor}}(\overline{M}_{0,4})=S_3$, while $\Aut(\overline{M}_{0,4})=\text{PGL}(2)$, hence Question (2) does not always have a positive answer.

\section*{Acknowledgements}
  We would like to thank Lucia Caporaso for precious suggestions on the preliminary versions of this paper.

\bigskip
\noindent{\smallsc Alex Abreu, Universidade Federal Fluminense,\\ Rua M. S. Braga, s/n, Valonguinho, 24020-005 Niter\'oi (RJ) Brazil.}\\
{\smallsl E-mail address: \small\verb?alexbra1@gmail.com?}

\bigskip
\bigskip

\noindent{\smallsc Marco Pacini, Universidade Federal Fluminense,\\ Rua M. S. Braga, s/n, Valonguinho, 24020-005 Niter\'oi (RJ) Brazil}\\
{\smallsl E-mail address: \small\verb?pacini@impa.br? and \small\verb?pacini.uff@gmail.com?}


\begin{thebibliography}{llll}

\bibitem{ACP}  D. Abramovich, L. Caporaso and S. Payne,
\emph{The tropicalization of the moduli space of curves},
Annales Scientifiques de L'\'Ecole Normale Sup\'erieure, series 4 {\bf 48 (4)} (2012), 765--809.

\bibitem{BMV} S. Brannetti, M. Melo and F. Viviani.
\emph{On the tropical Torelli Map},
Adv. Math. {\bf 226 (3)} (2011) 2546--2586.

\bibitem{BM} A. Bruno and M. Mella,
\emph{The automorphism group of $\overline{M}_{0,n}$},
Journal of the European Mathematical Society, {\bf 15 (3)} (2013), 949--968.

\bibitem{Caporaso} L. Caporaso, 
\emph{Algebraic and tropical curves: comparing their moduli spaces}.
In: Handbook of Moduli, Volume I. G. Farkas, I. Morrison (Eds.), Advanced Lectures in Mathematics, Volume XXIV (2012), 119--160.

\bibitem{Caporaso1} L. Caporaso,
\emph{Geometry of tropical moduli spaces and linkage of graphs}.
 Journal of Combinatorial Theory, Series A. {\bf 119} (2012) 579--598

\bibitem{CDPR} F. Cools, J. Draisma, S. Payne, E. Robeva,
\emph{A tropical proof of the Brill-Noehter Theorem}.
Adv. Math. {\bf 230} (2012) 759--776.

\bibitem{JP} D. Jensen and S. Payne,
\emph{Tropical independence I: Shapes of divisors and a proof of the Gieseker-Petri theorem.}
Algebra \& Number Theory, {\bf 8 (9)} (2014), 2043--2066.

\bibitem{JP1} D. Jensen and S. Payne,
\emph{Tropical indepence II: The maximal rank conjecture for quadrics}.
arxiv:1505.05460.

\bibitem{Ma} A. Massarenti,
\emph{The automorphism group of $\overline{M}_{g,n}$}.
Journal of the London Mathematical Society, {\bf 89} (2014), 131--150.

\bibitem{M}  G. Mikhalkin, 
\emph{Moduli spaces of rational tropical curves}.
Proceedings of G\"okova Geometry-Topology Conference 2006, 39--51, G\"okova Geometry/Topology Conference (GGT), G\"okova, 2007.

\end{thebibliography}
\end{document}